\setlist[enumerate,1]{label={\upshape(\roman*)}}
\title{Tail behavior of stopped \Levy processes with Markov modulation}
\author[1]{Brendan K. Beare}
\author[1]{Won-Ki Seo}
\author[2]{Alexis Akira Toda}
\affil[1]{School of Economics, University of Sydney}
\affil[2]{Department of Economics, University of California San Diego}
\numberwithin{equation}{section}
\numberwithin{thm}{section}
\newcommand{\Levy}{L\'evy\xspace}
\newcommand{\cadlag}{c\`adl\`ag\xspace}
\newcommand{\I}{\mathcal{I}}
\renewcommand{\S}{\mathcal{S}}
\renewcommand{\Re}{\operatorname{Re}}
\renewcommand{\Im}{\operatorname{Im}}
\renewcommand{\Pr}{\operatorname{P}}
\newcommand{\A}{\mathrm{A}}
\begin{document}
\maketitle

\begin{abstract}
This article concerns the tail probabilities of a light-tailed Markov-modulated \Levy process stopped at a state-dependent Poisson rate. The tails are shown to decay exponentially at rates given by the unique positive and negative roots of the spectral abscissa of a certain matrix-valued function. We illustrate the use of our results with an application to the stationary distribution of wealth in a simple economic model in which agents with constant absolute risk aversion are subject to random mortality and income fluctuation.
\end{abstract}

\onehalfspacing

\section{Introduction}\label{sec:intro}

This article provides machinery for characterizing the tail probabilities $\Pr(W_T>w)$ of a Markov-modulated \Levy process $(W_t)_{t\ge 0}$ stopped at a random time $T$. The class of Markov-modulated \Levy processes is a generalization of the class of \Levy processes in which the distributions of increments are governed by a Markov process $(J_t)_{t\ge 0}$, assumed here to have finite state-space. The stopping time $T$ is triggered at a Poisson rate dependent on the Markov state. We assume that our Markov-modulated \Levy process is light-tailed, in the sense that $\Pr(W_t>w)$ decays to zero exponentially or faster as $w\to\infty$ for any fixed $t$. Our main result is that $\Pr(W_T>w)$ decays to zero at an exponential rate given by the unique positive root of the spectral abscissa of a certain matrix-valued function. An analogous result holds for the lower tail.

Our article is intended as a companion piece to \citet{BeareToda2020}, where closely related results are obtained in a discrete-time setting. In both articles, the problem we solve is motivated by an influential contribution of \cite{reed2001}. Reed observed that in an economy consisting of a large number of units (e.g.\ firms, households, cities) with an exponential age distribution, each of which has been growing like a geometric Brownian motion since birth, the cross-sectional distribution of sizes is that of a geometric Brownian motion stopped at an exponentially distributed time. This turns out to be the double Pareto distribution. Reed's observation explains, at an intuitive level, why numerous heterogeneous-agent models of macroeconomic activity generate Pareto-tailed cross-sectional distributions of income, wealth, and consumption; see, e.g., \cite{Toda2014JET}, \cite{TodaWalsh2015JPE}, \cite{AlbornozFanelliHallak2016}, \cite{Arkolakis2016}, \cite{benhabib-bisin-zhu2016}, \cite{GabaixLasryLionsMoll2016}, \cite{AokiNirei2017}, \citet{CaoLuo2017}, \cite{JonesKim2018}, and \cite{KasaLei2018}.

If we take the logarithm of a random variable with a double Pareto distribution, then we obtain a random variable with a Laplace distribution. The tails of a Laplace distribution decay at an exponential rate. Consequently, the aforementioned observation of \cite{reed2001} implies that the distribution of a Brownian motion stopped at an exponentially distributed time has tails that decay at an exponential rate. The results of this article show that we continue to obtain an exponential rate of tail decay in a more general setting where in place of the Brownian motion we have any light-tailed Markov-modulated \Levy process, and where in place of the exponentially distributed stopping time we have a stopping time occurring at a state-dependent Poisson rate. This greatly expands the scope of potential applications: light-tailed \Levy processes are used widely in financial modeling as an empirically relevant generalization of Brownian motion, while Markov-modulation is a ubiquitous feature of empirically calibrated models in macroeconomics.

The remainder of this article is structured as follows. In Section \ref{sec:prelim} we briefly introduce the essential features of \Levy and Markov-modulated \Levy processes. Our primary results are presented in Section \ref{sec:results}. We illustrate the use of those results in Section \ref{sec:application} with an application to the distribution of wealth in a simple economic model with agents subject to random mortality and income fluctuation. Appendices \ref{subsec:nakagawa}-\ref{subsec:metzler} contain a presentation of mathematical results that play a key role in our proofs and may not be widely known among specialists in economic and econometric theory. Appendices \ref{subsec:proofresults} and \ref{subsec:proofapplication} contain proofs of results stated in Sections \ref{sec:results} and \ref{sec:application}.

\section{Preliminaries}\label{sec:prelim}
In this section we briefly recall the definitions of \Levy and Markov-modulated \Levy processes, and associated concepts. Our discussion is based mostly on \cite{Sato1999} and \cite{Asmussen2003}.

\subsection{\Levy processes}\label{subsec:prelimLevy}
A real-valued stochastic process $W = (W_t)_{t\ge 0}$ is called a \Levy process if it satisfies the following conditions.
\begin{enumerate}[(i)]
	\item $\Pr(W_0 = 0) = 1$.
	\item $W$ has \cadlag (right continuous with left limits) paths with probability one. 
	\item For any $s\ge 0$, $(W_{s+t}-W_s)_{t\ge 0}$ is independent of $(W_t)_{0\le t\le s}$.
	\item For any $s\ge 0$, the law of $(W_{s+t}-W_s)_{t\ge 0}$ is the same as the law of $(W_t)_{t\ge 0}$.
\end{enumerate}

The law of a \Levy process $W$ is uniquely determined by its \Levy exponent $\psi(z)=\log\E(\e^{zW_1})$, where $\log$ denotes the complex logarithm, defined here in terms of the principal value. For imaginary $z$, the \Levy exponent $\psi(z)$ is the complex logarithm of the characteristic function of $W_1$; for complex $z$ at which the expectation is well-defined, it is the complex logarithm of the Laplace transform or moment generating function (MGF) of $W_1$. Throughout this article we take the domain of $M_X(z)=\E(\e^{zX})$, the MGF of an arbitrary random variable $X$, to be the set
\begin{equation*}
\S_X = \set{z\in \C : \E(\e^{{(\Re z)X}}) < \infty},\label{eq:levydomain}
\end{equation*}
a strip in the complex plane. Every MGF is holomorphic on the interior of its domain, and has log-convex restriction to the real axis. Consequently, every \Levy exponent is holomorphic on the interior of its domain, and has convex restriction to the real axis.

A \Levy process $W$ is said to have a light upper (lower) tail if there is a positive (negative) real $s$ such that $\E(\e^{sW_1})<\infty$, meaning that the domain of the \Levy exponent $\psi(z)$ has a positive (negative) real element. Note that the property of finiteness of the MGF of $W_t$ is time-invariant, in the following sense: for any given real $s$, the three statements (a) $\E(\e^{sW_1})<\infty$, (b) $\E(\e^{sW_t})<\infty$ for some $t>0$, and (c) $\E(\e^{sW_t})<\infty$ for all $t>0$, are equivalent \citep[Theorem 25.17]{Sato1999}. Moreover, for any complex $z$ in the domain of $\psi(z)$, we have $\E(\e^{zW_t})=\e^{t\psi(z)}$ for all $t\ge 0$. A \Levy process is said to be light-tailed if it has light upper and lower tails.

Prominent examples of \Levy processes which we will make use of in our discussion include Brownian motions with drift, Poisson processes, and Cauchy processes. A Brownian motion with drift has \Levy exponent $\psi(z)=\mu z+\frac{1}{2}\sigma^2z^2$, with real parameters $\mu$ and $\sigma^2\ge 0$ called the drift and diffusion. A Poisson process has \Levy exponent $\psi(z)=\phi(\e^z-1)$, with real parameter $\phi\ge 0$ called the intensity. A Cauchy process has \Levy exponent $\psi(z)=-\abs{z}$, defined only for imaginary $z$.

\subsection{Markov-modulated \Levy processes}\label{subsec:prelimMMLP}

Let $\mathcal{N} = \set{1,\dots,N}$, a finite set. Consider a bivariate stochastic process $(W,J)=(W_t,J_t)_{t\ge 0}$ taking values in $\R\times\mathcal{N}$. We say that $W$ is a Markov-modulated \Levy process, and that $J$ is the Markov-modulator of $W$, if the following conditions are satisfied.
\begin{enumerate}[(i)]
	\item $\Pr(W_0 = 0) = 1$.
	\item $W$ and $J$ have \cadlag paths with probability one. 
	\item For any $s\ge 0$, $(W_{s+t}-W_s,J_{s+t})_{t\ge 0}$ is conditionally independent of $(W_t,J_t)_{0\le t\le s}$ given $J_s$.
	\item For any $n\in\mathcal{N}$ and any $s\ge 0$, the law of $(W_{s+t}-W_s,J_{s+t})_{t\ge 0}$ conditional on $J_s=n$ is the same as the law of $(W_t,J_t)_{t\ge 0}$ conditional on $J_0=n$.
\end{enumerate}

It is clear from conditions (iii) and (iv) that the Markov-modulator $J$ is a time-homogeneous Markov process taking values in $\mathcal{N}$. When $\mathcal{N}$ is a singleton, conditions (i)--(iv) reduce to their analogues in Section \ref{subsec:prelimLevy}, indicating that $W$ is a \Levy process. When $\mathcal{N}$ has multiple elements, $W$ is not in general a \Levy process but may be regarded, loosely speaking, as behaving like a \Levy process in a Markov environment.

The joint law of a Markov-modulated \Levy process $W$ and its Markov-modulator $J$ may be specified in terms of the following four objects.
\begin{enumerate}[(i)]
	\item \emph{Initial state probabilities}: An $N\times 1$ vector $\varpi$ with nonnegative entries $\varpi_n$ summing to one.
	\item \emph{Infinitesimal generator}: An $N\times N$ matrix $\Pi$ with nonnegative off-diagonal entries $\pi_{nn'}$, and all rows summing to zero.
	\item \emph{State-dependent \Levy exponents}: An $N\times N$ diagonal matrix-valued function $\Psi(z)$ with \Levy exponents $\psi_n(z)$ along the diagonal. (We take the domain of $\Psi(z)$ to be the intersection of the domains of the diagonal components.)
	\item \emph{MGFs for additional jumps}: An $N\times N$ matrix-valued function $\Upsilon(z)$ with diagonal entries equal to one and MGFs $\upsilon_{nn'}(z)$ in the off-diagonal entries. (We set $\upsilon_{nn'}(z)=1$ if the corresponding transition rate $\pi_{nn'}=0$, and take the domain of $\Upsilon(z)$ to be the intersection of the domains of the off-diagonal components.)
\end{enumerate}

These four objects uniquely determine the law of $(W,J)$ in the following way. First, the law of the time-homogeneous Markov process $J$ is uniquely determined by the law of $J_0$, which is given by $\varpi$, and the infinitesimal generator matrix $\Pi$. Specifically, we may draw $J_0$ according to the probabilities $\Pr(J_0=n)=\varpi_n$, and then generate $(J_t)_{t>0}$ according to the transition rates
\begin{equation*}
\lim_{t\downarrow 0}\frac{1}{t}\Pr(J_{s+t}=n'\mid J_s=n)=\pi_{nn'},\quad n\ne n'.
\end{equation*}
Next, the law of the Markov-modulated \Levy process $W$ conditional on $J$ is uniquely determined by $\Psi(z)$ and $\Upsilon(z)$ as follows. Let $t_0=0$, and let $t_1,t_2,\dotsc$ denote the successive times at which $J$ transitions between states. Given $J$, the Markov-modulated \Levy process $W$ is initialized at $W_0=0$ and evolves as a \Levy process with \Levy exponent $\psi_{n}(z)$ over each interval $(t_k,t_{k+1})$ with $J_{t_k}=n$. The transition points $t_k$ may trigger additional jumps in $W$: if $J$ transitions from state $n$ to state $n'$ at time $t_k$, then $W$ experiences a jump at time $t_k$ with size determined by the MGF $\upsilon_{nn'}(z)$. Note that the MGF $\upsilon_{nn'}(z)$ assigns probability one to a jump of size zero if $\upsilon_{nn'}(z)=1$.

If a state $n$ is never reached with probability one, then the \Levy exponent $\psi_n(z)$ and the transition densities $\pi_{nn'}$ and jump MGFs $\upsilon_{nn'}(z)$ will be undetermined for each $n'\ne n$. In this case we should simply reformulate the state space to exclude the redundant state $n$.

Let $\odot$ denote the Hadamard (entry-wise) product between two matrices of the same size. The following characterization of the conditional MGFs of a Markov-modulated \Levy process is a key input to our results.

\begin{prop}\label{prop:asmussen}
	Let $W$ be a Markov-modulated \Levy process parametrized by $\varpi$, $\Pi$, $\Psi(z)$ and $\Upsilon(z)$ as above. For any $t>0$, any $z$ in the domains of $\Psi(z)$ and $\Upsilon(z)$, and any $n,n'\in\mathcal{N}$, the conditional expectation  $\E(\e^{zW_t}1(J_t=n')\mid J_0=n)$ is equal to the $(n,n')$-entry of the matrix $\exp(t(\Psi(z)+\Pi\odot\Upsilon(z)))$.
\end{prop}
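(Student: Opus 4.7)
The plan is to define $F(t,z)$ as the $N \times N$ matrix with $(n,n')$ entry $\E(\e^{zW_t} 1(J_t = n') \mid J_0 = n)$ and show that $F(\cdot, z)$ satisfies the linear matrix ODE $\partial_t F(t,z) = F(t,z) K(z)$ on $t \ge 0$ with initial condition $F(0,z) = I$, where $K(z) := \Psi(z) + \Pi \odot \Upsilon(z)$. Since this ODE has unique solution $\exp(t K(z))$, the statement will follow.

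The first input is the semigroup identity $F(s+t, z) = F(s,z) F(t,z)$. I would factor $\e^{zW_{s+t}} 1(J_{s+t} = n') = \e^{zW_s} \cdot \e^{z(W_{s+t}-W_s)} 1(J_{s+t}=n')$, take the conditional expectation given $(W_u, J_u)_{u \le s}$, invoke condition (iii) of the MMLP definition to collapse this conditioning down to $J_s$, and then invoke condition (iv) to identify the conditional law of $(W_{s+t}-W_s, J_{s+t})$ given $J_s = m$ with that of $(W_t, J_t)$ given $J_0 = m$; summing over the intermediate state $m \in \mathcal{N}$ converts the total expectation into the matrix product $F(s, z) F(t, z)$. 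The second input is the short-time expansion $F(h, z) = I + h K(z) + o(h)$. Conditioning on the modulator trajectory, starting in state $n$ the event of no transition on $[0, h]$ has probability $\e^{\pi_{nn} h}$ and contributes $\delta_{n n'} \e^{h(\psi_n(z) + \pi_{nn})}$, since $W$ then accrues a pure \Levy increment with exponent $\psi_n(z)$; the event of a single transition from $n$ to $n' \ne n$ has probability $h \pi_{nn'} + o(h)$ and contributes $h \pi_{nn'} \upsilon_{nn'}(z) + o(h)$ via the additional jump at the transition time; multiple transitions contribute $o(h)$. Reading off diagonal and off-diagonal entries, and noting that $\upsilon_{nn}(z) = 1$, yields exactly $I + h K(z) + o(h)$.

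Combining the two inputs, $F(t+h, z) - F(t, z) = F(t, z)(F(h,z) - I) = h F(t, z) K(z) + o(h)$, which gives the desired ODE, and $F(0,z) = I$ is immediate from $\Pr(W_0 = 0) = 1$. I expect the main obstacle to be rigorous handling of the $o(h)$ remainder: because $z$ is complex, $\e^{zW_h}$ is unbounded, and one must justify uniform integrability in order to pass from probabilistic order statements to expectation order statements. The remedy is to exploit $|\e^{zW_h}| = \e^{(\Re z) W_h}$ together with the standing hypothesis that $z$ lies in the domains of every $\psi_n$ and every nontrivial $\upsilon_{nn'}$, so that the MGFs at $\Re z$ of both the pure \Levy increments and the additional transition jumps are finite and dominate the rare multi-transition contributions. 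Finiteness of $\mathcal{N}$ then makes all bounds uniform across states and reduces the verification to standard arguments.
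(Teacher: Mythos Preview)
Your proposal is essentially correct and follows the standard semigroup argument; the paper, however, gives no proof at all, simply citing Proposition 2.2 of \citet[p.~311]{Asmussen2003}. What you have written is, in outline, exactly the argument one finds in Asmussen's book: establish the multiplicative semigroup property of $F(t,z)$ from the Markov additive structure, compute the infinitesimal generator $K(z)$ by a first-jump decomposition over a short interval, and solve the resulting linear ODE. So there is no meaningful comparison of approaches to make --- you have supplied the proof that the paper delegates to a reference, and your identification of the uniform-integrability issue for complex $z$ as the only nontrivial technical point is accurate.
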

\begin{proof}
	See Proposition 2.2 of \citet[p.~311]{Asmussen2003}.
\end{proof}

For readers seeking additional details on Markov-modulated \Levy processes, a useful textbook treatment may be found in \citet[ch.~XI]{Asmussen2003}, where such processes are called Markov additive processes.

\section{Results}\label{sec:results}

Let $W=(W_t)_{t\ge 0}$ be a Markov-modulated \Levy process, with Markov-modulator $J=(J_t)_{t\ge 0}$ taking values in a finite set $\mathcal{N}=\set{1,\dots,N}$. Let $\varpi$, $\Pi$, $\Psi(z)$ and $\Upsilon(z)$ be, respectively, the $N\times1$ vector of initial probabilities, $N\times N$ infinitesimal generator matrix, $N\times N$ diagonal matrix of \Levy exponents, and $N\times N$ matrix of jump MGFs for $(W,J)$, as described in Section \ref{subsec:prelimMMLP}. These four objects uniquely determine the law of $(W,J)$. We seek to characterize the tails of $W_T$, where $T$ is a random time.

We would like to interpret $T$ as a time-of-death when mortality occurs at a state-dependent Poisson rate. To this end, we define a second Markov-modulated \Levy process $V=(V_t)_{t\ge 0}$, which shares the Markov-modulator $J$ of $W$. As a counterpart to $\Psi(z)$, we introduce an $N\times N$ diagonal matrix $\Phi$ with nonnegative real entries $\phi_n$ along the diagonal. As in Section \ref{subsec:prelimMMLP}, let $t_0=0$, and let $t_1,t_2,\dotsc$ denote the successive times at which $J$ transitions between states. We suppose that $V$ is initialized at $V_0=0$ and evolves as a Poisson process of intensity $\phi_{n}$ over each interval $(t_k,t_{k+1})$ with $J_{t_k}=n$. We suppose that $V$ is continuous at each transition time $t_k$ with probability one; therefore, the implicit counterpart to $\Upsilon(z)$ is an $N\times N$ matrix of ones. This uniquely specifies the law of $V$ given $J$. To complete our specification of the law of $(V,W,J)$, we assume that $V$ is conditionally independent of $W$ given $J$. We set $T=\inf\set{t\ge 0:V_t>0}$, the time at which $V$ experiences its first jump. A condition on $\Pi$ and $\Phi$ that is necessary and sufficient to have $\Pr(T<\infty)=1$ is given in Proposition \ref{prop:finiteT} below.

Let $\S$ be the intersection of the domains of $\Psi(z)$ and $\Upsilon(z)$; that is, the intersection of the domains of all of the \Levy exponents $\psi_n(z)$ and MGFs $\upsilon_{nn'}(z)$. Recalling our discussion of such domains in Section \ref{subsec:prelimLevy}, we observe that $\S$ is a strip in the complex plane, and that its real part $\I=\set{\Re z:z\in\S}$ is an interval, possibly a singleton or unbounded, containing zero. In order to apply Theorems \ref{thm:tailprob} and \ref{thm:exptail} below to the upper (lower) tail of $W_T$, we will require that $\I$ contains a positive (negative) element. In this sense, $W$ is required to be light-tailed; this extends the notion of a light-tailed \Levy process given in Section \ref{subsec:prelimLevy} to a Markov-modulated setting.

For $z\in\S$, define the complex matrix-valued function
\begin{equation}
\A(z)=\Psi(z)+\Pi\odot\Upsilon(z)-\Phi.\label{eq:defA}
\end{equation}
Our characterization of the tails of $W_T$ will depend on the behavior of the spectral abscissa of $\A(s)$ as a function of the real variable $s\in\I$. Recall that the spectral abscissa of a complex square matrix $A$, which we denote $\zeta(A)$, is the maximum of the real parts of the complex eigenvalues of $A$; that is,
\begin{equation*}
\zeta(A)=\max\set{\Re\lambda:\text{$\lambda$ is an eigenvalue of $A$}}.
\end{equation*}

\begin{rem}\label{rem:metzler}
	A real square matrix is said to be Metzler if its off-diagonal entries are nonnegative. As discussed in Appendix \ref{subsec:metzler}, the spectral abscissa of a Metzler matrix is an eigenvalue of that matrix, and has associated left and right eigenvectors with nonnegative entries. This result resembles the Perron-Frobenius theorem for nonnegative matrices. The matrix $\A(s)$ is Metzler for every $s\in\I$.
\end{rem}

Our first result concerns the shape of the spectral abscissa of $\A(s)$ as a function of the real variable $s\in\I$. The proof, which relies on a result of \citet{Nussbaum1986} on the convexity of spectral abscissae, may be found in Appendix \ref{subsec:proofresults}, along with the proofs of all other numbered propositions and theorems stated in this section.

\begin{prop}\label{prop:convex}
	The spectral abscissa $\zeta(\A(s))$ is a convex function of $s\in\I$ and satisfies $\zeta(\A(0))\le 0$. If $\zeta(\A(0))<0$, then the equation $\zeta(\A(s))=0$ admits at most one positive solution $s=\alpha\in\I$ and at most one negative solution $s=-\beta\in\I$.
\end{prop}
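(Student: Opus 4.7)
My plan is to address the three assertions in order, with convexity being the substantive step and the other two following by short calculations.

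For the convexity of $\zeta(\A(s))$, I would invoke the result of Nussbaum on convexity of spectral abscissae for matrix-valued functions of a real variable, which the authors have evidently collected in Appendix \ref{subsec:nakagawa}. The hypotheses to check on $\A(s)$ are that it is Metzler-valued with convex diagonal entries and log-convex off-diagonal entries. Both are immediate from Section \ref{subsec:prelimLevy}: the diagonal entries $\psi_n(s)-\phi_n$ are convex in $s$ because Lévy exponents have convex restriction to the real axis, and the off-diagonal entries $\pi_{nn'}\upsilon_{nn'}(s)$ are nonnegative multiples of MGFs and hence log-convex in $s$.

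For the bound $\zeta(\A(0))\le 0$, I would compute $\A(0)$ explicitly. Since every Lévy exponent vanishes at the origin and every MGF equals $1$ there, $\Psi(0)=0$ and $\Upsilon(0)$ is the all-ones matrix, giving $\A(0)=\Pi-\Phi$. This Metzler matrix has row sums $-\phi_n\le 0$, and the standard bound on the spectral abscissa of a Metzler matrix by its largest row sum (obtained by shifting $\A(0)+cI$ to be nonnegative for large $c$ and applying Perron--Frobenius, or invoking Appendix \ref{subsec:metzler}) then yields $\zeta(\A(0))\le -\min_n\phi_n\le 0$.

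For the uniqueness claim, assume $\zeta(\A(0))<0$ and suppose for contradiction that two positive roots $0<s_1<s_2$ exist in $\I$. Writing $s_1$ as a strict convex combination $s_1=\tfrac{s_2-s_1}{s_2}\cdot 0+\tfrac{s_1}{s_2}\cdot s_2$ and applying convexity gives
\[\zeta(\A(s_1))\le \tfrac{s_2-s_1}{s_2}\zeta(\A(0))+\tfrac{s_1}{s_2}\zeta(\A(s_2))<0,\]
contradicting $\zeta(\A(s_1))=0$. The case of two negative roots is symmetric. The main obstacle is matching the exact hypotheses on diagonal versus off-diagonal entries to the form of Nussbaum's theorem compiled in the appendix; this should be a bookkeeping exercise rather than a substantive difficulty, and the remaining two parts are essentially one-line computations.
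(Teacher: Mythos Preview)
Your proposal is correct and follows essentially the same route as the paper: Nussbaum's convexity theorem for the first claim, Metzler spectral bounds for the second, and convexity for the third. Two minor corrections: the diagonal entries of $\A(s)$ are $\psi_n(s)+\pi_{nn}-\phi_n$ (the omitted constant $\pi_{nn}$ does not affect convexity), and Nussbaum's theorem is cited directly from \citet{Nussbaum1986} rather than housed in Appendix~\ref{subsec:nakagawa}, which contains Nakagawa's tauberian theorem; for $\zeta(\A(0))\le 0$ the paper compares $\A(0)\le\Pi$ and uses $\zeta(\Pi)=0$ via Theorems~\ref{thm:rowsums} and~\ref{thm:strictmonotonicity}, which is equivalent to your direct max-row-sum bound.
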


We illustrate the content of Proposition \ref{prop:convex} in Figure \ref{fig:alphabeta}. In the case depicted, the endpoints of $\I$ are finite, but these may also be infinite. The condition $\zeta(\A(0))<0$ is satisfied; note that $\A(0)=\Pi-\Phi$. The following result shows that this condition has a natural interpretation.

\begin{figure}
	\centering
	\begin{tikzpicture}[x=0.5cm,y=0.5cm,z=0.3cm,scale=6]
	\draw[thick, ->] (xyz cs:x=-1.35) -- (xyz cs:x=1.5) node[below] {$s$};
	\draw[thick, ->] (xyz cs:y=-.5) -- (xyz cs:y=1.5) node[right] {$\zeta(\A(s))$};
	\node[below left] at (xyz cs:x=-.64) {$-\beta$};
	\node[below right] at (xyz cs:x=.47) {$\alpha$};
	\node[below right] at (xyz cs:x=0) {$0$};
	\draw[smooth,samples=400,variable=\x,domain={-.94}:{1.12}] plot (\x,{-.3+ln((e^(.5*\x))/(1-((1/1.3)-(1/1))*\x-(1/(1*1.3))*\x*\x))},0);
	\draw[dashed] (xyz cs:x=-1.15,y=0) -- (xyz cs:x=-1.15,y=1.5);
	\draw[dashed] (xyz cs:x=1.3,y=0) -- (xyz cs:x=1.3,y=1.5);
	\filldraw (xyz cs:x=-1.15) circle (.3pt);
	\filldraw (xyz cs:x=1.3) circle (.3pt);
	\filldraw (xyz cs:x=-.64) circle (.3pt);
	\filldraw (xyz cs:x=.47) circle (.3pt);
	\filldraw (xyz cs:x=0) circle (.3pt);
	\draw[<-] (xyz cs:x=-1.15,y=-.05) -- (xyz cs:x=-1.15,y=-.25);
	\node[below right,font=\small, align=left] at (xyz cs:x=-1.85,y=-.25) {Left endpoint of $\I$};
	\draw[<-] (xyz cs:x=1.3,y=-.05) -- (xyz cs:x=1.3,y=-.25);
	\node[below left,font=\small, align=right] at (xyz cs:x=2,y=-.25) {Right endpoint of $\I$};
	\end{tikzpicture}
	\caption{Determination of $\alpha$ and $\beta$ from the spectral abscissa of $\A(s)$.}\label{fig:alphabeta}
\end{figure}
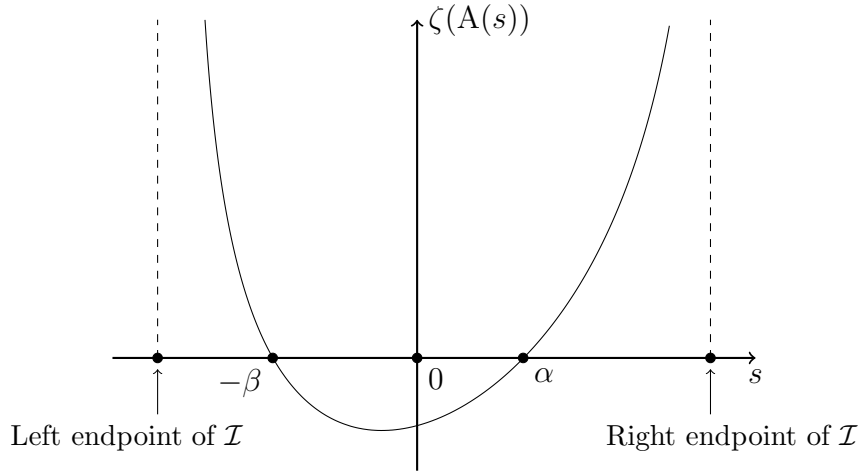

\begin{prop}\label{prop:finiteT}
	 We have $\Pr(T<\infty)=1$ if and only if $\zeta(\A(0))<0$.
\end{prop}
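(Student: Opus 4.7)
The plan is to derive an explicit formula $\Pr(T>t)=\varpi^\top \e^{t\A(0)}\mathbf{1}$ and then read off the claimed equivalence from the long-run behavior of the matrix exponential combined with the Metzler structure of $\A(0)$. First, I note that $\Psi(0)=0$ and every entry of $\Upsilon(0)$ equals one, so $\Pi\odot\Upsilon(0)=\Pi$ and hence $\A(0)=\Pi-\Phi$.

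To obtain the formula, I would apply Proposition \ref{prop:asmussen} to the auxiliary Markov-modulated \Levy process $(V,J)$, whose state-dependent \Levy exponents are $\psi^V_n(z)=\phi_n(\e^z-1)$ and whose jump MGFs are identically one. This yields $\E(\e^{zV_t}1(J_t=n')\mid J_0=n)=(\exp(t(\Psi^V(z)+\Pi)))_{nn'}$, where $\Psi^V(z)=\operatorname{diag}(\phi_n(\e^z-1))$. Since $\set{T>t}=\set{V_t=0}$, letting $z\to-\infty$ along the real axis (so that $\Psi^V(z)\to-\Phi$) and combining dominated convergence on the left with continuity of the matrix exponential on the right gives
\begin{equation*}
\Pr(T>t,J_t=n'\mid J_0=n)=(\e^{t\A(0)})_{nn'},
\end{equation*}
so that $\Pr(T>t)=\varpi^\top\e^{t\A(0)}\mathbf{1}$ after summing against $\varpi$ and the all-ones vector.

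For the ``if'' direction, $\zeta(\A(0))<0$ implies $\e^{t\A(0)}\to 0$ by standard matrix theory, so $\Pr(T>t)\to 0$. For the converse, Proposition \ref{prop:convex} already gives $\zeta(\A(0))\le 0$, so it suffices to rule out $\zeta(\A(0))=0$. Assume for contradiction that $\zeta(\A(0))=0$; then by Remark \ref{rem:metzler} there is a nonzero nonnegative right eigenvector $v$ of the Metzler matrix $\A(0)$ with eigenvalue $0$. Rescaling so that $v\le\mathbf{1}$ and using that $\e^{t\A(0)}$ has nonnegative entries, I obtain $\e^{t\A(0)}\mathbf{1}\ge\e^{t\A(0)}v=v$ for every $t$, so $\Pr(T>t\mid J_0=n)\ge v_n$ uniformly in $t$, and $\Pr(T<\infty\mid J_0=n)<1$ whenever $v_n>0$.

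The main obstacle is reconciling this final inequality with the hypothesis $\Pr(T<\infty)=1$ when $\varpi$ happens to be concentrated away from the support of $v$; this is handled by the paper's standing convention that never-reached states have been excluded from $\mathcal{N}$, so that positive survival probability from states with $v_n>0$ propagates to the remaining states via the strong Markov property of $(V,J)$. Modulo this point, the argument is a clean combination of the Markov-additive exponential formula from Proposition \ref{prop:asmussen} with the Perron-Frobenius-type structure of $\A(0)$ recorded in Remark \ref{rem:metzler}.
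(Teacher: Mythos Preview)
Your argument is correct and takes a genuinely different route from the paper. The paper augments the state space with an absorbing death state, invokes Theorem 3.3.1 of \citet{Norris1997} to characterize the vector of absorption probabilities $p=(p_n)$ as the minimal nonnegative solution of $\A(0)x=\A(0)1_N$, and then shows that this minimal solution equals $1_N$ if and only if $\A(0)$ is invertible, equivalently $\zeta(\A(0))<0$. You instead extract the explicit survival formula $\Pr(T>t\mid J_0=n)=(\e^{t\A(0)}1_N)_n$ from Proposition \ref{prop:asmussen} via a limit in $z$ (one could equally cite the standard sub-Markov semigroup interpretation of $\Pi-\Phi$), and then read off both directions from matrix-exponential asymptotics together with the Perron eigenvector of the Metzler matrix $\A(0)$. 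Your approach is more self-contained, trading the external Markov-chain citation for tools already assembled in the paper. The issue you flag at the end---$\varpi$ possibly placing no mass on the support of $v$---is real and is implicit in the paper's proof as well; your resolution via the no-redundant-states convention and the strong Markov property of $(V,J)$ is sound, though a fully detailed version would note that the first hitting time of $\{n:v_n>0\}$ by $J$ occurs before $T$ with positive probability because the rates $\phi_n$ are bounded.
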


Our main results, Theorems \ref{thm:tailprob} and \ref{thm:exptail}, concern the case where there exists a positive and/or negative $s$ in the interior of $\I$ solving $\zeta(\A(s))=0$. Figure \ref{fig:alphabeta} depicts a case where both positive and negative solutions exist, but this need not always occur. We now give three examples of ways in which there might not exist a positive or negative solution. In each example we have $N=1$, so $W$ is a \Levy process with no Markov-modulation, and the equation $\zeta(\A(s))=0$ reduces to $\psi(s)=\phi$.

\begin{exmp}\label{ex:Cauchy}
	If $W$ is a Cauchy process then $\I=\set{0}$ and thus there cannot exist any positive or negative $s\in\I$. The problem is that a Cauchy process is not light-tailed.
\end{exmp}

\begin{exmp}\label{ex:nonneg}
	If $W$ is a Poisson process with intensity $\gamma>0$ then its \Levy exponent satisfies $\psi(s)=\gamma(\e^s-1)<0$ for all $s<0$, so there cannot exist a negative solution to $\psi(s)=\phi$. The problem is that a Poisson process is nonnegative. If $\phi>0$ then the unique positive solution is $s=\log(1+\phi/\gamma)$.
\end{exmp}

\begin{exmp}\label{ex:essentialsingularity}
	This example is more subtle. Fix $c>0$ and define a measure $\nu$ on $\R$ by $\nu(\diff x)=1(x\ge1)cx^{-2}\e^{-x}\diff x$. Let $W$ be a pure jump process with \Levy measure $\nu$, meaning that $W$ has \Levy exponent satisfying
	\begin{equation*}
	\psi(s)=\int_{-\infty}^\infty(\e^{sx}-1)\nu(\diff x)=c\int_1^\infty(\e^{(s-1)x}-\e^{-x})x^{-2}\diff x
	\end{equation*}
	for all real $s$ in its domain. The last integral is finite for $s\le1$ and infinite for $s>1$, so $\I=(-\infty,1]$. If $\phi>0$ then we may choose $c$ small enough that $\psi(1)<\phi$, in which case there will not exist a positive $s\in\I$ solving $\psi(s)=\phi$. The problem is that $\int(\e^{sx}-1)\nu(\diff x)$ jumps from below $\phi$ to infinity at the right endpoint of $\I$.
\end{exmp}

\begin{rem}\label{rem:existence}
A simple sufficient condition for the existence of a positive $s$ in the interior of $\I$ solving $\zeta(\A(s))=0$ is that (i) $\Pr(T<\infty)=1$, and (ii) at least one of the \Levy exponents $\psi_n(s)$, viewed as a function of real $s\in\I$, diverges to infinity as $s$ increases to the (positive, possibly infinite) right endpoint of $\I$. In this case $\zeta(\A(s))$ is negative for $s=0$ (Proposition \ref{prop:finiteT}) and increases to infinity as $s$ approaches the right endpoint of $\I$, since
\begin{equation*}
\zeta(\A(s))\ge\zeta(\diag(\A(s)))=\max\set{\psi_n(s)+\pi_{nn}-\phi_n:n\in\mathcal{N}}\to\infty,
\end{equation*}
using a monotonicity property of the spectral abscissa (Theorem \ref{thm:strictmonotonicity}) to obtain the inequality. Since $\zeta(\A(s))$ is convex (Proposition \ref{prop:convex}), hence continuous, on the interior of $\mathcal I$, it follows from the intermediate value theorem that there exists a positive $s$ solving $\zeta(\A(s))=0$. A similar remark applies to the existence of a negative solution.
\end{rem}

When $\Pr(T<\infty)=1$ the random variable $W_T$ is well-defined on a set of probability one. The remainder of our results concern its distribution. We begin by identifying the MGF of $W_T$. Define
\begin{equation*}
\I_-=\set{s\in \I:\zeta(\A(s))<0},\quad \S_-=\set{z\in\C:\Re z\in\I_-}.
\end{equation*}
We will obtain an expression for the MGF of $W_T$ on $\S_-$. The following property of $\S_-$ is important because it ensures that $\A(z)$ is invertible on $\S_-$.

\begin{prop}\label{prop:Sminus}
	We have $\zeta(\A(z))<0$ for every $z\in\S_-$.
\end{prop}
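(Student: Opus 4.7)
The plan is to reduce the complex case to the real case by a dominance argument for Metzler matrices. Fix $z\in\S_-$ and set $s=\Re z$; by definition of $\S_-$, we have $s\in\I_-$, so $\zeta(\A(s))<0$. Our task is to bound the real parts of the eigenvalues of $\A(z)$ by the real Metzler quantity $\zeta(\A(s))$.

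The first step is to establish entrywise comparisons between $\A(z)$ and $\A(s)$. For any MGF $M_X$ and any $z$ in its domain, the elementary bound $\abs{M_X(z)}=\abs{\E(\e^{zX})}\le\E(\e^{(\Re z)X})=M_X(s)$ holds. Applied to the jump MGFs, this gives $\abs{\pi_{nn'}\upsilon_{nn'}(z)}\le\pi_{nn'}\upsilon_{nn'}(s)$ for $n\ne n'$, using $\pi_{nn'}\ge 0$. Applied to $\e^{\psi_n(z)}=M_n(z)$, where $M_n$ is the MGF of $W_1$ under \Levy exponent $\psi_n$, it gives $\Re\psi_n(z)=\log\abs{M_n(z)}\le\log M_n(s)=\psi_n(s)$. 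Since $\Phi$ is real diagonal and $\pi_{nn}$ is real, it follows that
\begin{equation*}
\Re \A(z)_{nn}\le \A(s)_{nn}\quad\text{and}\quad \abs{\A(z)_{nn'}}\le \A(s)_{nn'}\text{ for }n\ne n'.
\end{equation*}

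The second step is to invoke a comparison theorem for the spectral abscissa which says that whenever a complex matrix $C$ and a Metzler matrix $B$ satisfy $\Re C_{nn}\le B_{nn}$ for all $n$ and $\abs{C_{nn'}}\le B_{nn'}$ for all $n\ne n'$, one has $\zeta(C)\le\zeta(B)$. This is a consequence of the Perron--Frobenius-type result for Metzler matrices recalled in Appendix \ref{subsec:metzler} (and in particular Theorem \ref{thm:strictmonotonicity}, applied after shifting both matrices by a sufficiently large real multiple of the identity to make them nonnegative, then passing to moduli and comparing spectral radii via the standard Perron--Frobenius dominance inequality $\rho(C)\le\rho(\abs C)\le\rho(B)$). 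Recalling that $\A(s)$ is Metzler and applying this with $C=\A(z)$ and $B=\A(s)$ yields
\begin{equation*}
\zeta(\A(z))\le\zeta(\A(s))<0,
\end{equation*}
which is the desired conclusion.

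The only delicate point is locating and invoking the correct form of the Metzler/Perron--Frobenius comparison in Appendix \ref{subsec:metzler}; the analytic inputs (log-convexity and the modulus bound $\abs{M_X(z)}\le M_X(\Re z)$) are immediate from the definition of an MGF and are already recorded in Section \ref{subsec:prelimLevy}. Everything else is bookkeeping.
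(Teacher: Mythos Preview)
Your proof is correct and follows essentially the same route as the paper's: establish the entrywise dominance $\Re\A_{nn}(z)\le\A_{nn}(s)$ and $\abs{\A_{nn'}(z)}\le\A_{nn'}(s)$ via the MGF modulus bound (the paper packages this as Lemma \ref{lem:ridge}), then invoke Theorem \ref{thm:strictmonotonicity} to conclude $\zeta(\A(z))\le\zeta(\A(s))<0$.

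One caveat: your parenthetical sketch of \emph{why} Theorem \ref{thm:strictmonotonicity} holds---shift by $dI$, pass to moduli, compare spectral radii---does not actually work, because $\abs{C_{nn}+d}$ need not be bounded by $B_{nn}+d$ when $C_{nn}$ has nonzero imaginary part (take the $1\times 1$ example $C=i$, $B=0$). This does not damage your proof of the proposition, since you correctly cite Theorem \ref{thm:strictmonotonicity} as a black box and the paper supplies its proof via \citet{Deutsch1975}; just drop or correct the parenthetical.
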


Let $1_N$ denote an $N\times1$ vector with each entry equal to one. We have the following representation of the MGF of $W_T$ on $\S_-$.

\begin{prop}\label{prop:MGF}
If $\Pr(T<\infty)=1$ then, for each $z\in\S_-$, the MGF of $W_T$ satisfies
\begin{equation}	
M_{W_T}(z)=\E(\e^{zW_T})=\varpi^\top\A(z)^{-1}\A(0)1_N.\label{eq:M_W_T}
\end{equation}
\end{prop}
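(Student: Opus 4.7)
The plan is to represent $\E(\e^{zW_T})$ as an integral of a matrix exponential over time, then evaluate it using the invertibility of $\A(z)$ on $\S_-$. The key lemma I would establish first is that, for $z\in\S$ and $t\ge 0$,
\[
\E\!\left(\e^{zW_t}\,\e^{-\int_0^t\phi_{J_u}\,\diff u}\,1(J_t=n')\mid J_0=n\right)=\bigl[\exp(t\A(z))\bigr]_{nn'}. \quad(\ast)
\]
To prove $(\ast)$, I would apply a bivariate extension of Proposition~\ref{prop:asmussen} to $(W,V)$, which shares the modulator $J$. Since $W\perp V\mid J$, the joint state-dependent \Levy exponent in state $n$ is $\psi_n(z)+\phi_n(\e^y-1)$, only $W$ contributes jumps at transitions of $J$, and $V$'s jump-MGF matrix is identically one, so the extension yields
\[
\E\bigl(\e^{zW_t+yV_t}\,1(J_t=n')\mid J_0=n\bigr)=\bigl[\exp\bigl(t(\Psi(z)+(\e^y-1)\Phi+\Pi\odot\Upsilon(z))\bigr)\bigr]_{nn'}.
\]
Because $V\mid J$ is an inhomogeneous Poisson process of rate $\phi_{J_u}$ with $V\perp W\mid J$, conditioning on $(W,J)$ gives $\E(\e^{yV_t}\mid W,J)=\exp((\e^y-1)\int_0^t\phi_{J_u}\,\diff u)$, so the left side also equals $\E(\e^{zW_t}\exp((\e^y-1)\int_0^t\phi_{J_u}\,\diff u)\,1(J_t=n')\mid J_0=n)$. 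Letting $y\to-\infty$ and invoking dominated convergence (using the pointwise bound $|\e^{zW_t}\exp((\e^y-1)\int_0^t\phi_{J_u}\,\diff u)|\le\e^{(\Re z)W_t}$, which is integrable since $\Re z\in\I$) yields $(\ast)$.

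With $(\ast)$ in hand, the main computation proceeds as follows. Since $T$ is the first jump time of $V$ and $T\perp W\mid J$, the conditional density of $T$ given $(W,J)$ is $\phi_{J_t}\exp(-\int_0^t\phi_{J_u}\,\diff u)$. Hence, by Fubini,
\[
\E(\e^{zW_T})=\int_0^\infty\E\!\left(\e^{zW_t}\phi_{J_t}\,\e^{-\int_0^t\phi_{J_u}\,\diff u}\right)\diff t=\int_0^\infty\varpi^\top\exp(t\A(z))\Phi\,1_N\,\diff t,
\]
where the second equality uses decomposition over $(J_0,J_t)$ and $(\ast)$. Proposition~\ref{prop:Sminus} gives $\zeta(\A(z))<0$ on $\S_-$, so $\int_0^\infty\exp(t\A(z))\,\diff t=-\A(z)^{-1}$, yielding $\E(\e^{zW_T})=-\varpi^\top\A(z)^{-1}\Phi\,1_N$. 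Finally, $\Psi(0)=0$ and $\Upsilon(0)$ has all entries equal to one, so $\A(0)=\Pi-\Phi$; together with $\Pi\,1_N=0$ this gives $\A(0)\,1_N=-\Phi\,1_N$, and the stated identity follows.

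The main obstacle is $(\ast)$: the bivariate extension of Proposition~\ref{prop:asmussen} is a routine adaptation, but the dominated-convergence passage $y\to-\infty$ must be handled carefully to simultaneously accommodate complex $z\in\S$. Everything else is direct integration and algebra, with convergence of the time integral guaranteed by Proposition~\ref{prop:Sminus} and the Fubini step justified by the same integrability bound.
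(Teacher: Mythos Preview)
Your argument is correct and arrives at the same formula, but by a genuinely different route than the paper. The paper augments the state space with an absorbing ``death'' state $N+1$, stops $W$ at $T$ to form $\widetilde{W}_t=W_{T\wedge t}$, and applies Proposition~\ref{prop:asmussen} on the augmented space; the quantity $\E(\e^{zW_T}1(T\le t)\mid J_0=n)$ is then read off as the $(n,N+1)$ entry of the exponential of a block upper-triangular $(N+1)\times(N+1)$ matrix, evaluated via a standard block-exponential formula, and the limit $t\to\infty$ is taken using monotone and dominated convergence. You instead work directly with the conditional density of $T$ given $J$ and the Feynman--Kac-type identity $(\ast)$, then integrate in $t$ in a single step. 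Your route avoids the state-space augmentation and the block-exponential formula, at the cost of needing a bivariate version of Proposition~\ref{prop:asmussen} and the $y\to-\infty$ limiting argument to obtain $(\ast)$; the paper's route stays entirely within the scalar statement of Proposition~\ref{prop:asmussen} but pays for that with the extra bookkeeping of the augmented chain. Both are clean. It is worth noting that $(\ast)$ can also be read off from the paper's construction: since $\e^{-\int_0^t\phi_{J_u}\,\diff u}=\Pr(T>t\mid J)=\E(1(V_t=0)\mid J)$, the left side of $(\ast)$ equals $\E(\e^{zW_t}1(\widetilde{J}_t=n')\mid J_0=n)$, which is the $(n,n')$ entry of the upper-left $N\times N$ block of the augmented matrix exponential, namely $[\exp(t\A(z))]_{nn'}$.
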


\begin{exmp}\label{exmp:Brownian}
	(Brownian motion with drift.) Consider the simple case where there is no Markov-modulation ($N=1$), and where $W$ is a Brownian motion with drift, with \Levy exponent $\psi(z)=\mu z+\frac{1}{2}\sigma^2z^2$.  We then have $\zeta(\A(z))=\mu z+\frac{1}{2}\sigma^2z^2-\phi$, a quadratic polynomial in $z$ with positive and negative roots
	\begin{equation*}
	\alpha=\frac{-\mu+\sqrt{\mu^2+2\sigma^2\phi}}{\sigma^2},\quad-\beta=\frac{-\mu-\sqrt{\mu^2+2\sigma^2\phi}}{\sigma^2}.
	\end{equation*}
	Applying Proposition \ref{prop:MGF}, we find that the MGF of $W_T$ is given by
	\begin{equation*}
	M_{W_T}(z)=\frac{\phi}{\phi-\mu z-\frac{1}{2}\sigma^2z^2}=\frac{\alpha\beta}{(\alpha-z)(\beta+z)}.
	\end{equation*}
	This is the MGF of the asymmetric Laplace (or double exponential) distribution, which has probability density function
	\begin{equation*}
	f(x)=\begin{cases*}
	\frac{\alpha\beta}{\alpha+\beta}\e^{-\alpha x}& for $x\ge 0$,\\
	\frac{\alpha\beta}{\alpha+\beta}\e^{\beta x}& for $x<0$.
	\end{cases*}
	\end{equation*}
	The fact that a Brownian motion with drift stopped at an exponentially distributed time has the asymmetric Laplace distribution was shown by \citet{reed2001}.
\end{exmp}

In Example \ref{exmp:Brownian}, the distribution of $W_T$ has exponentially decaying tail probabilities, with the rates of exponential decay in each tail given by the positive and negative roots $\alpha$ and $-\beta$ of $\zeta(\A(s))$, as in Figure \ref{fig:alphabeta}. Our main result, Theorem \ref{thm:tailprob}, shows that this is a general feature of light-tailed Markov-modulated \Levy processes stopped at a state-dependent Poisson rate. To establish it, we rely on an assumption that $\Pi$ is irreducible. A complex $N\times N$ matrix $A=(a_{nn'})$ is said to be irreducible if, for any nonempty proper subset $\mathcal M$ of $\mathcal{N}$, there is an $n\in\mathcal M$ and an $n'\in\mathcal{N}\setminus\mathcal M$ such that $a_{nn'}\ne 0$. For the infinitesimal generator matrix $\Pi$, irreducibility means that the state variable $J_t$ cannot become permanently confined to any strict subset of the state space.

\begin{thm}\label{thm:tailprob}
	Suppose that $\Pr(T<\infty)=1$ and that $\Pi$ is irreducible. If the equation $\zeta(\A(s))=0$ admits a positive solution $s=\alpha$ in the interior of $\I$ then
	\begin{equation}\label{eq:mainup}
	0<\liminf_{w\to\infty}\e^{\alpha w}\Pr(W_T>w)\le\limsup_{w\to\infty}\e^{\alpha w}\Pr(W_T>w)<\infty.
	\end{equation}
	Similarly, if the equation $\zeta(\A(s))=0$ admits a negative solution $s=-\beta$ in the interior of $\I$ then
	\begin{equation}\label{eq:mainlow}
	0<\liminf_{w\to\infty}\e^{\beta w}\Pr(W_T<-w)\le\limsup_{w\to\infty}\e^{\beta w}\Pr(W_T<-w)<\infty.
	\end{equation}
\end{thm}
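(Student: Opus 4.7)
The strategy is to combine the explicit MGF formula of Proposition \ref{prop:MGF} with a Tauberian theorem of Nakagawa (Appendix \ref{subsec:nakagawa}), which converts information about a pole of an MGF into bounds on the exponential rate of tail decay. I treat the upper-tail estimate \eqref{eq:mainup} first; the lower-tail estimate \eqref{eq:mainlow} will follow by applying the same argument to $-W$, whose associated spectral abscissa function is $s\mapsto\zeta(\A(-s))$ with positive root $\beta$.

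The first task is to verify that $M_{W_T}(z)=\varpi^\top\A(z)^{-1}\A(0)1_N$, holomorphic on $\S_-$ by Proposition \ref{prop:MGF}, has abscissa of convergence equal to $\alpha$ and extends meromorphically across the line $\Re z=\alpha$. By Proposition \ref{prop:convex} combined with $\zeta(\A(0))<0$ (Proposition \ref{prop:finiteT}), the convex function $s\mapsto\zeta(\A(s))$ is negative on an open interval containing zero and crosses zero from below at $s=\alpha$; so the explicit formula remains valid on a punctured disk around $\alpha$, while $M_{W_T}(s)=\infty$ for real $s>\alpha$ in $\I$.

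The structural heart of the argument is to identify the order and residue of the singularity of $\A(z)^{-1}$ at $z=\alpha$. Since $\Pi$ is irreducible, $\A(\alpha)$ is an irreducible Metzler matrix, so by the Perron--Frobenius theory for such matrices (Appendix \ref{subsec:metzler}), $\zeta(\A(\alpha))=0$ is a simple eigenvalue with strictly positive left and right eigenvectors $\ell,r$. Analytic perturbation theory then gives that $\A(z)^{-1}$ has a simple pole at $\alpha$ with residue a nonzero scalar multiple of the rank-one matrix $r\ell^\top$. To promote this to a simple pole of $M_{W_T}$ itself with nonzero residue, I verify that the scalar $(\varpi^\top r)(\ell^\top\A(0)1_N)$ does not vanish: $\varpi^\top r>0$ because $r$ is entrywise positive and $\varpi$ is a nonzero probability vector, while $\A(0)=\Pi-\Phi$ and $\Pi 1_N=0$ yield $\ell^\top\A(0)1_N=-\sum_n\ell_n\phi_n<0$, using that the hypothesis $\Pr(T<\infty)=1$ forces at least one $\phi_n$ to be strictly positive and that all $\ell_n$ are strictly positive.

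With a simple pole of $M_{W_T}$ at $\alpha$ and nonzero residue established, Nakagawa's theorem applied to the distribution of $W_T$ yields \eqref{eq:mainup}. The main obstacle is the structural step of showing that the singularity of $\A(z)^{-1}$ at $\alpha$ is genuinely a simple pole whose residue survives the sandwich between $\varpi^\top$ and $\A(0)1_N$; this is precisely where irreducibility of $\Pi$, transmitted through the Perron--Frobenius theory for Metzler matrices, does the decisive work.
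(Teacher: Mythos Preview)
Your proposal is correct and follows essentially the same route as the paper: extend $M_{W_T}(z)=\varpi^\top\A(z)^{-1}\A(0)1_N$ meromorphically to the interior of $\S$, use irreducibility of $\A(\alpha)$ and the Perron--Frobenius theory for Metzler matrices to show that the zero eigenvalue is simple with positive eigenvectors, deduce that $\alpha$ is a simple pole of $\A(z)^{-1}$ with rank-one residue, verify $(\varpi^\top r)(\ell^\top\A(0)1_N)\neq 0$ via $\A(0)1_N=-\Phi 1_N$, and invoke Nakagawa. The only cosmetic difference is that where you appeal to ``analytic perturbation theory'' for the simple-pole-and-residue step, the paper invokes the packaged Theorem~\ref{thm:matrixpencil} (Howland/Schumacher), which gives the equivalence of simplicity of the pole with algebraic simplicity of the zero eigenvalue and the residue formula $x(y^\top\A'(\alpha)x)^{-1}y^\top$ in one stroke.
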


\begin{rem}
Our proof of Theorem \ref{thm:tailprob} involves showing that $\alpha$ and $-\beta$ are poles of $\A(z)^{-1}$. Under the irreducibility condition on $\Pi$, we show that these poles of $\A(z)^{-1}$ are simple, and are also simple poles of $M_{W_T}(z)$. This allows us to apply a theorem of \citet{Nakagawa2007}, discussed in Appendix \ref{subsec:nakagawa}, to obtain positive and finite lower and upper bounds for the limits inferior and superior in \eqref{eq:mainup} and \eqref{eq:mainlow}.
\end{rem}

\begin{rem}\label{rem:loglim}
	If \eqref{eq:mainup} is satisfied then there exist $w_0,c,c'\in(0,\infty)$ such that
	$$c\le\e^{\alpha w}\Pr(W_T>w)\le c'$$
	for all $w\ge w_0$. Taking logarithms, dividing by $w$, and letting $w\to\infty$, we obtain
	\begin{equation}\label{eq:loglim}
	\lim_{w\to\infty}\frac{1}{w}\log \Pr(W_T>w)=-\alpha.
	\end{equation}
	Thus \eqref{eq:mainup} implies \eqref{eq:loglim}. A similar statement applies to the lower tail.
\end{rem}

We illustrate the use of Theorem \ref{thm:tailprob} with applications to a two-state Brownian motion with drift, and a two-state linear trend.

\begin{exmp}\label{exmp:twostateBrownian}
	(Two-state Brownian motion with drift.) Suppose next that there are two Markov states ($N=2$), and that in state $n$, $W$ evolves like a Brownian motion with drift $\mu_n$ and diffusion $\sigma_n^2>0$, and $V$ evolves like a Poisson process with intensity $\phi_n$. Suppose that transitions do not trigger jumps, so that all entries of $\Upsilon(z)$ are equal to one. The rows of the infinitesimal generator $\Pi$ must sum to zero, so we write it as
	\begin{equation*}
	\Pi=\begin{bmatrix}
	-\pi_1&\pi_1\\
	\pi_2&-\pi_2
	\end{bmatrix}.
	\end{equation*}
	Suppose that $\pi_1,\pi_2>0$, so that $\Pi$ is irreducible, and suppose that $\phi_1>0$ or $\phi_2>0$, so that $\Pr(T<\infty)=1$. The matrix $\A(s)$ in \eqref{eq:defA} is given by
	\begin{equation*}
	\A(s)=\begin{bmatrix}
	\frac{1}{2}\sigma_1^2s^2+\mu_1s-\phi_1-\pi_1&\pi_1\\
	\pi_2&\frac{1}{2}\sigma_2^2s^2+\mu_2s-\phi_2-\pi_2
	\end{bmatrix}\eqqcolon\begin{bmatrix}
	a&b\\c&d
	\end{bmatrix}.
	\end{equation*}
    It has two real eigenvalues given by the solutions to the quadratic equation
	\begin{equation*}    
	\lambda^2-(a+d)\lambda+(ad-bc)=0,
	\end{equation*}
    the larger of which is the spectral abscissa
    \begin{equation}\label{eq:2stateabscissa}
    \zeta(\A(s))=\frac{a+d+\sqrt{(a+d)^2-4(ad-bc)}}{2}.
    \end{equation}
    Note that the discriminant satisfies
    $$(a+d)^2-4(ad-bc)=(a-d)^2+4bc>0$$
    due to the fact that $bc=\pi_1\pi_2>0$.
    
    To apply Theorem \ref{thm:tailprob}, we seek positive and negative values of $s$ such that the right-hand side of \eqref{eq:2stateabscissa} is equal to zero. This occurs when $a+d\le 0$ and $ad-bc=0$; that is, when
    \begin{equation}\label{eq:2statequad}
    \frac{1}{2}(\sigma_1^2+\sigma_2^2)s^2+(\mu_1+\mu_2)s-(\phi_1+\phi_2)-(\pi_1+\pi_2)\le 0
    \end{equation}
    and
    \begin{equation}\label{eq:2statequart}
    \left(\frac{1}{2}\sigma_1^2s^2+\mu_1s-\phi_1-\pi_1\right)\left(\frac{1}{2}\sigma_2^2s^2+\mu_2s-\phi_2-\pi_2\right)-\pi_1\pi_2=0.
    \end{equation}
    Denote by $f(s)$ the left-hand side of \eqref{eq:2statequart}, a quartic polynomial in $s$, and denote by $g_1(s)$ and $g_2(s)$ the two quadratic polynomials in parentheses on the left-hand side of \eqref{eq:2statequart}. Observe that
    \begin{equation}\label{eq:2stateq1}
    f(0)=\pi_1\phi_2+\phi_1\pi_2+\phi_1\phi_2>0
    \end{equation}
    since by assumption $\pi_1,\pi_2>0$ and $\phi_1>0$ or $\phi_2>0$. Each quadratic polynomial $g_n(s)$ has positive and negative roots $\alpha_n,-\beta_n$, at which our quartic polynomial $f(s)$ satisfies
    \begin{equation}\label{eq:2stateq2}
    f(-\beta_1)=f(-\beta_2)=f(\alpha_1)=f(\alpha_2)=-\pi_1\pi_2<0.
    \end{equation}
    From \eqref{eq:2stateq1}, \eqref{eq:2stateq2}, $f(\pm\infty)=\infty$, and the intermediate value theorem, we deduce that each of the four disjoint intervals
    \begin{equation}\label{eq:quarticroots}
    (-\infty,-\max\set{\beta_1,\beta_2}),\,(-\min\set{\beta_1,\beta_2},0),\,(0,\min\set{\alpha_1,\alpha_2}),\,(\max\set{\alpha_1,\alpha_2},\infty)
    \end{equation}
    contains at least one root of $f(s)$. Since $f$ is a quartic polynomial, by the fundamental theorem of algebra, $f$ has exactly four complex roots. Therefore $f$ has four distinct real roots, one in each interval in \eqref{eq:quarticroots}. The two quadratic polynomials $g_1(s)$ and $g_2(s)$ are negative over the interval $(0,\min\set{\alpha_1,\alpha_2})$, implying that their sum is negative, and so the quadratic inequality \eqref{eq:2statequad} is satisfied. On the other hand, $g_1(s)$ and $g_2(s)$ are both positive over the interval $(\max\set{\alpha_1,\alpha_2},\infty)$, implying that their sum is positive, and so \eqref{eq:2statequad} is not satisfied. Thus the unique positive root of the quartic polynomial $f(s)$ belonging to the interval $(0,\min\set{\alpha_1,\alpha_2})$ is the unique positive solution $s=\alpha$ to the equation $\zeta(\A(s))=0$ and, by Theorem \ref{thm:tailprob}, is equal to the rate of exponential decay in the upper tail of $W_T$. Similarly, the unique negative root of $f(s)$ belonging to the interval $(-\min\set{\beta_1,\beta_2},0)$ is the unique negative solution $s=-\beta$ to the equation $\zeta(\A(s))=0$, and provides the rate of exponential decay in the lower tail of $W_T$.
\end{exmp}

\begin{exmp}\label{exmp:twostatelinear}
	(Two-state linear trend.) Suppose we eliminate the diffusive component in the two-state Brownian motion considered in Example \ref{exmp:twostateBrownian} by setting $\sigma_1^2=\sigma_2^2=0$, thereby obtaining a two-state linear trend. In this case the quadratic inequality \eqref{eq:2statequad} reduces to
	\begin{equation}
	(\mu_1+\mu_2)s\le\phi_1+\phi_2+\pi_1+\pi_2,\label{eq:twostatelinear0}
	\end{equation}
	and the quartic equation \eqref{eq:2statequart} reduces to
	\begin{equation}
	\left(\mu_1s-\phi_1-\pi_1\right)\left(\mu_2s-\phi_2-\pi_2\right)-\pi_1\pi_2=0.\label{eq:twostatelinear}
	\end{equation}
	Denote by $g(s)$ the left-hand side of \eqref{eq:twostatelinear}, a quadratic polynomial in $s$ if $\mu_1\mu_2\ne 0$, or linear otherwise. Note that $g(0)>0$ by the same argument as in Example \ref{exmp:twostateBrownian}. Excluding the case $\mu_1\mu_2=0$ and assuming without loss of generality that $\mu_1\le\mu_2$, there are three cases to consider.
	\begin{case}[$\mu_1<0<\mu_2$]
		Since $g(0)>0$ and $g(\pm\infty)=-\infty$, the quadratic polynomial $g(s)$ has unique positive and negative roots $\alpha$ and $-\beta$. Thus, by Theorem \ref{thm:tailprob}, $W_T$ has exponential upper and lower tails with decay rates $\alpha$ and $\beta$ respectively.
	\end{case}
	\begin{case}[$0<\mu_1\le\mu_2$]
		Let $\alpha_n=(\pi_n+\phi_n)/\mu_n>0$, so that $g(\alpha_1)=g(\alpha_2)=-\pi_1\pi_2<0$. Since $g(0)>0$ and $g(\infty)=\infty$, the disjoint intervals $(0,\min\set{\alpha_1,\alpha_2})$ and $(\max\set{\alpha_1,\alpha_2},\infty)$ must each contain a root of the quadratic polynomial $g(s)$. The larger root violates inequality \eqref{eq:twostatelinear0}. Letting $\alpha$ denote the smaller root, we deduce from Theorem \ref{thm:tailprob} that $W_T$ has an exponential upper tail with decay rate $\alpha$.
	\end{case}
	\begin{case}[$\mu_1\le\mu_2<0$]
		Symmetrically to Case 2, $W_T$ has an exponential lower tail with decay rate given by the maximum of the two negative roots of $g(s)$.
	\end{case}
\end{exmp}

\begin{rem}
	\cite{CaoLuo2017} study a general equilibrium neoclassical growth model in continuous-time with persistent heterogeneous returns to investment. Productivity in their model is a two-state Markov process. This leads to a stationary distribution of log-wealth whose upper tail is determined in a manner similar to that of $W_T$ in Example \ref{exmp:twostatelinear} above. Theorem 3 of \cite{CaoLuo2017} establishes that the upper tail exponent of their stationary distribution of wealth is given by the solution to a quadratic equation similar to \eqref{eq:twostatelinear} above.
\end{rem}

Theorem \ref{thm:tailprob} establishes conditions under which $W_T$ has upper tail decaying exponentially at rate $\alpha$ in the sense of \eqref{eq:mainup}, and lower tail decaying exponentially at rate $\beta$ in the sense of \eqref{eq:mainlow}. It may be reasonable to ask whether \eqref{eq:mainup} and \eqref{eq:mainlow} may be strengthened to
\begin{align}
0&<\liminf_{w\to\infty}\e^{\alpha w}\Pr(W_T>w)=\limsup_{w\to\infty}\e^{\alpha w}\Pr(W_T>w)<\infty,\quad\text{and}\label{eq:exactup}\\
0&<\liminf_{w\to\infty}\e^{\beta w}\Pr(W_T<-w)=\limsup_{w\to\infty}\e^{\beta w}\Pr(W_T<-w)<\infty,\label{eq:exactlow}
\end{align}
respectively, so that $\e^{\alpha w}\Pr(W_T>w)$ and $\e^{\beta w}\Pr(W_T<-w)$ converge to positive and finite limits as $w\to\infty$. In fact, this is not possible in general: Example \ref{exmp:poisson} below provides a case where $W_T$ satisfies \eqref{eq:mainup} but not \eqref{eq:exactup}.

If we would like to strengthen Theorem \ref{thm:tailprob} such that the tails of $W_T$ decay exponentially in the sense of \eqref{eq:exactup} and \eqref{eq:exactlow}, then it suffices to impose a non-lattice condition. We say that an MGF is lattice if it corresponds to a distribution that is supported on a lattice; that is, a distribution that assigns probability one to a set of the form $\set{a+bm:m\in\Z}$ for some real $a$ and $b>0$. Otherwise, we say that an MGF is non-lattice.

\begin{thm}\label{thm:exptail}
	Suppose that either \begin{inparaenum}[\normalfont(i)]\item for some $n\in\mathcal{N}$, the MGF $\exp\psi_n(z)$ is non-lattice, or \item for some $n,n'\in\mathcal{N}$ with $\pi_{nn'}>0$, the MGF $\upsilon_{nn'}(z)$ is non-lattice. \end{inparaenum} Then Theorem \ref{thm:tailprob} remains valid with \eqref{eq:exactup} in place of \eqref{eq:mainup}, and \eqref{eq:exactlow} in place of \eqref{eq:mainlow}.
\end{thm}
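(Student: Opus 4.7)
The plan is to upgrade the Nakagawa-type Tauberian argument that drives Theorem \ref{thm:tailprob}. That argument produces only $\liminf$/$\limsup$ bounds because Nakagawa's inversion allows $\e^{\alpha w}\Pr(W_T>w)$ to oscillate around its true asymptotic; the role of the non-lattice hypothesis is to eliminate such oscillation. In its sharper form (or via a Wiener--Ikehara-style Tauberian theorem applied to the $\alpha$-tilted distribution of $W_T$) the relevant theorem delivers an actual limit provided that $z=\alpha$ is the \emph{unique} singularity of
\begin{equation*}
M_{W_T}(z)=\varpi^\top\A(z)^{-1}\A(0)1_N
\end{equation*}
on the vertical line $\Re z=\alpha$. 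I will establish this uniqueness; the argument for the lower tail is identical after reflection.

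The task reduces to the key claim: for every real $t\ne 0$, $\A(\alpha+\ii t)$ is invertible. Suppose, for contradiction, that $\A(\alpha+\ii t_0)x=0$ for some $t_0\ne 0$ and some nonzero $x\in\C^N$. Fix any $u>0$ and pass to the discrete-time matrix $Q(z):=\exp(u\A(z))$, which by Proposition \ref{prop:asmussen} applied to $(W,J)$ with killing rate $\Phi$ has the representation
\begin{equation*}
Q(z)_{nn'}=\E(\e^{zW_u}1(J_u=n')1(T>u)\mid J_0=n).
\end{equation*}
Since $\Pi$ is irreducible and $\A(\alpha)$ is Metzler with strictly positive off-diagonal entries on the support of $\Pi$, the matrix $Q(\alpha)$ is entry-wise strictly positive. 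Because $\zeta(\A(\alpha))=0$, the Perron root of $Q(\alpha)$ equals $1$; and the assumption $\A(\alpha+\ii t_0)x=0$ implies $Q(\alpha+\ii t_0)x=x$, so $1$ is also an eigenvalue of $Q(\alpha+\ii t_0)$.

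The entry-wise comparison $|Q(\alpha+\ii t_0)_{nn'}|\le Q(\alpha)_{nn'}$, combined with the classical Perron--Frobenius rigidity for entry-wise positive matrices (an eigenvalue of maximum modulus forces equality in every entry after a unimodular diagonal similarity), then forces
\begin{equation*}
|\E(\e^{(\alpha+\ii t_0)W_u}1(J_u=n',T>u)\mid J_0=n)|=\E(\e^{\alpha W_u}1(J_u=n',T>u)\mid J_0=n)
\end{equation*}
for every $n,n'$. Equality in a triangle inequality of this form means that the sub-probability distribution of $W_u$ on the event $\{J_u=n',T>u\}$ is supported on a lattice with spacing $2\pi/|t_0|$. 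Specializing to sample paths of $J$ that remain in a single state $n$ throughout $[0,u]$ shows that the \Levy process with exponent $\psi_n$ has lattice law at time $u$, hence $\exp\psi_n$ is lattice; specializing to trajectories of $J$ that transition exactly once, from $n$ to $n'$, and using independence of the \Levy increments and the transition jump (together with the fact that a sum of independent random variables is lattice only if each summand is) shows that $\upsilon_{nn'}$ is lattice whenever $\pi_{nn'}>0$. Either conclusion contradicts the non-lattice hypothesis.

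The main obstacle will be the Perron--Frobenius rigidity step together with its path-wise disaggregation: translating equality in the matrix-level triangle inequality into a lattice conclusion about the specific building blocks $\psi_n$ and $\upsilon_{nn'}$ requires careful use of the characterization of equality in the integral triangle inequality, and a decomposition of $\E(\cdot\mid J_0=n)$ over trajectory classes of $J$ that isolate the non-lattice component. Once uniqueness of the pole of $M_{W_T}$ on $\{\Re z=\alpha\}$ is established, the refined Nakagawa/Wiener--Ikehara conclusion delivers \eqref{eq:exactup}; the mirror argument yields \eqref{eq:exactlow}.
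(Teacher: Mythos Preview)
Your strategy is sound and would work, but it is considerably more elaborate than the paper's. Both arguments reduce to showing that $\A(\alpha+\ii t)$ is invertible for every real $t\ne 0$, so that $\alpha$ is the only singularity of $M_{W_T}$ on its right axis of convergence and Theorem~\ref{thm:tauberian} delivers an actual limit. The paper accomplishes this in three lines by working directly at the level of $\A$: apply Lemma~\ref{lem:ridge} entrywise to obtain $\Re\A_{nn}(\alpha+\ii t)\le\A_{nn}(\alpha)$ and $|\A_{nn'}(\alpha+\ii t)|\le\A_{nn'}(\alpha)$, note that the non-lattice hypothesis makes at least one of these inequalities strict, and then invoke the strict-monotonicity result for the spectral abscissa of an irreducible Metzler matrix (Theorem~\ref{thm:strictmonotonicity}) to conclude $\zeta(\A(\alpha+\ii t))<\zeta(\A(\alpha))=0$. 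No exponentiation, no path decomposition, no Wielandt rigidity.

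Your route---exponentiating to $Q(z)=\exp(u\A(z))$, applying Wielandt's equality case of Perron--Frobenius to force $|Q(\alpha+\ii t_0)_{nn'}|=Q(\alpha)_{nn'}$, and then disaggregating over trajectory classes of $J$ to pin the lattice structure on a specific $\psi_n$ or $\upsilon_{nn'}$---is correct but buys you nothing here, since the entries of $\A(z)$ are already (shifted) \Levy exponents and (scaled) MGFs, exactly the objects to which Lemma~\ref{lem:ridge} applies directly. The step you flag as the main obstacle (passing from matrix-level equality to lattice conclusions about individual building blocks via conditional independence and equality in the triangle inequality) is genuinely delicate and would need to be written out carefully; the paper's approach sidesteps it entirely by never leaving the level of $\A$. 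Your approach might have an edge in a setting where the entries of the generator are not themselves MGFs or log-MGFs, so that the ridge lemma cannot be applied entrywise, but that is not the situation here.
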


\begin{rem}
	Our proof of Theorem \ref{thm:exptail} involves showing that our non-lattice condition ensures that the simple poles $\alpha$ and $-\beta$ of the MGF $M_{W_T}(z)$ in \eqref{eq:M_W_T} are the unique singularities on the lines $\Re z=\alpha,-\beta$, called the axes of convergence of $M_{W_T}(z)$. This strengthens the lower and upper bounds on the limits inferior and superior in \eqref{eq:mainup} and \eqref{eq:mainlow} obtained using the theorem of \citet{Nakagawa2007}, such that the lower and upper bounds are equal.
\end{rem}

In situations where Theorem \ref{thm:tailprob} applies but the non-lattice condition in Theorem \ref{thm:exptail} is not satisfied, \eqref{eq:exactup} or \eqref{eq:exactlow} may fail to hold, as the following example shows.

\begin{exmp}\label{exmp:poisson}
	(Poisson process.) Consider the case where there is no Markov-modulation ($N=1$), and where $W$ is a Poisson process with intensity $\gamma>0$, as in Example \ref{ex:nonneg}. We saw there that when $\phi>0$, the unique positive solution to $\psi(s)=\phi$ is $s=\log(1+\phi/\gamma)$. Therefore, applying Theorem \ref{thm:tailprob}, we find that the upper tail of $W_T$ decays exponentially in the sense of \eqref{eq:mainup}, with decay rate $\alpha=\log(1+\phi/\gamma)$.
	
	Does the upper tail of $W_T$ decay exponentially in the stronger sense of \eqref{eq:exactup}? We cannot deduce this from Theorem \ref{thm:exptail}, because the MGF $\exp\psi(z)$ corresponds to a Poisson distribution, which is supported on the lattice of integers. In fact, the upper tail of $W_T$ does not satisfy \eqref{eq:exactup}, as we now show. Using the formula for the probability mass function of the Poisson distribution, for any nonnegative integer $k$ we obtain
	\begin{equation*}	
	\Pr(W_T = k) = \E(\Pr(W_T= k\mid T))= \int_{0}^\infty \frac{(\gamma x)^k}{k!} \e^{-\gamma x}\phi\e^{-\phi x} \diff x=\frac{\phi \gamma^k}{(\gamma + \phi)^{k+1}}.
	\end{equation*}
	It follows that
	\begin{equation*}
	\Pr(W_T > w) = \Pr(W_T \ge \floor{w} + 1) = \frac{\phi}{\gamma+\phi} \sum_{k=\floor{w} + 1}^\infty \left(\frac{\gamma}{\gamma+\phi}\right)^k =\left(\frac{\gamma}{\gamma+\phi}\right)^{\floor{w}+1},
	\end{equation*}
	where $\floor{\cdot}$ rounds down to the nearest integer. Since $\e^\alpha=(\gamma+\phi)/\gamma$, we deduce that
	\begin{equation*}	
	\e^{\alpha w}\Pr(W_T > w) =\left(\frac{\gamma}{\gamma+\phi}\right)^{\floor{w}-w+1},
	\end{equation*}
	which oscillates between 1 and $\gamma/(\gamma+\phi)$ as $w\to\infty$, in contravention of \eqref{eq:exactup}.
\end{exmp}	

\section{Application to the distribution of wealth}\label{sec:application}

We illustrate our results with an application to the distribution of wealth in a simple economic model in which agents with constant absolute risk aversion (CARA) are subject to idiosyncratic income fluctuation. The model we consider is similar to those of \citet{Wang2003} and \citet{Moll2020}. Throughout this section, to avoid repeated qualifications about sets of measure zero, we regard any two stochastic processes whose paths are equal with probability one to be the same object.

\subsection{Single agent problem}\label{sec:singleagent}

Let $J=(J_t)_{t\ge 0}$ be a \cadlag Markov process taking values in the finite set $\mathcal{N}=\set{1,\dots,N}$, with irreducible $N\times N$ infinitesimal generator matrix $\Pi$, and $N\times 1$ vector of initial probabilities $\varpi$, which we take to be given by the stationary distribution of $J$. Let $t_0=0$, and let $t_1,t_2,\dotsc$ denote the successive times at which $J$ transitions between states. We suppose that for $t$ belonging to an interval $[t_k,t_{k+1})$ with $J_t=n$, an agent receives a constant flow of income $Y_t=y_n\in\R$. The agent chooses a consumption flow $C=(C_t)_{t\ge 0}$, which is a stochastic process satisfying two conditions:
\begin{enumerate}[(i)]
	\item $C$ is adapted to the filtration generated by $J$.
	\item $C$ is \cadlag, with discontinuities permitted only at $t_1,t_2,\dotsc$.
\end{enumerate}
We denote by $\mathcal{C}$ the collection of all stochastic processes $C$ satisfying (i) and (ii). Condition (i) ensures that the agent's consumption at time $t$ depends only on information available at time $t$. Condition (ii) allows us to define the agent's wealth over each interval $[t_k,t_{k+1})$ to be the unique solution to an ordinary differential equation (ODE) satisfying a boundary condition at $t_k$. Specifically, we define the agent's wealth $W^C=(W^C_t)_{t\ge 0}$, whose dependence on $C$ we make explicit in our notation, to be the unique continuous stochastic process initialized at $W^C_0=0$ which, over each interval $[t_k,t_{k+1})$, satisfies the ODE
\begin{equation}\label{eq:ODE}
	\frac{\diff W^C_t}{\diff t}=rW^C_t+Y_t-C_t,
\end{equation}
where $r>0$ is a fixed rate of interest. The existence of a unique solution to this ODE satisfying a boundary condition at $t_k$ is ensured by the fact that $Y_t-C_t$ is continuous on $[t_k,t_{k+1})$ with finite limit at $t_{k+1}$ \citep[Corollary 5.1, p.~31]{Hartman1982}.

The agent's objective function is the expected discounted flow of future utility, defined for $C\in\mathcal{C}$ by
\begin{equation}\label{eq:objective}
	U(C)=\E_0\int_0^\infty \e^{-\rho t}u(C_t)\diff t.
\end{equation}
Here, $\E_0$ denotes expectation conditional on $J_0$, the parameter $\rho>0$ is the agent's rate of discounting, and $u(\cdot)$ is the CARA utility function $u(c)=-\e^{-\gamma c}/\gamma$, with coefficient of absolute risk aversion $\gamma>0$. Since $C$ is measurable under condition (ii) and $u(\cdot)$ is bounded from above, the integral in \eqref{eq:objective} is well-defined, though possibly equal to minus infinity.

The agent's consumption flow $C$ is required to satisfy the no-Ponzi condition
\begin{equation}\label{eq:budget}
	\liminf_{t\to\infty}\e^{-\rho t}\E_0(\e^{-\gamma rW^C_t})=0.
\end{equation}
The no-Ponzi condition serves as a borrowing constraint by limiting the agent's ability to accumulate debt. We will see in Appendix \ref{subsec:proofapplication} that it corresponds to the transversality condition for the agent's optimization problem. We denote by $\mathcal{C}_1$ the collection of all $C\in\mathcal{C}$ satisfying \eqref{eq:budget}.

The optimization problem we have just described resembles a continuous-time reformulation of the discrete-time problem studied by \citet{Wang2003}. The one substantive difference is that \citet{Wang2003} assumes an autoregressive law of motion for income, which we are unable to accommodate due to our assumption that the number of Markov states is finite. Our optimization problem is also very similar to the leading example discussed by \citet{Moll2020}, also in continuous-time, which assumes a two-state income process. Those authors use a fixed lower bound for wealth as a borrowing constraint; we instead follow \citet{Wang2003} and use the transversality condition as a borrowing constraint. The optimal consumption flow is given by the following result, proved in Appendix \ref{subsec:proofapplication}.

\begin{prop}\label{prop:optimum}
	$U(C)$ is uniquely maximized over $\mathcal{C}_1$ by a consumption flow $C^\ast$, with associated wealth process $W^\ast$, such that
	\begin{equation}\label{eq:optimalc}
		C^\ast_t=rW^\ast_t+b_n\quad\text{while }J_t=n,
	\end{equation}
	where $b_1,\dots,b_N$ are the unique real numbers solving the system of equations
	\begin{equation}\label{eq:bnsystem}
		b_n=y_n-\frac{1}{\gamma}+\frac{\rho}{r\gamma}-\frac{1}{\gamma r}\sum_{n'=1}^N\pi_{nn'}\e^{\gamma(b_{n}-b_{n'})},\quad n\in\mathcal{N}.
	\end{equation}
\end{prop}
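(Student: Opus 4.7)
The plan is the classical Hamilton-Jacobi-Bellman (HJB) guess-and-verify argument, adapted to this Markov-modulated CARA setting. The first step is to write down the HJB system for the state-dependent value function $v_n(w)$: for each $n \in \mathcal{N}$,
\[
\rho v_n(w) = \max_c\bigl[u(c) + v_n'(w)(rw + y_n - c)\bigr] + \sum_{n'}\pi_{nn'}v_{n'}(w).
\]
Motivated by the CARA structure, I would try the separable exponential ansatz $v_n(w) = -(\gamma r)^{-1}\exp(-\gamma(rw + b_n))$. The first-order condition $u'(c) = v_n'(w)$ then immediately yields the candidate policy $c = rw + b_n$ in state $n$. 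Substituting this policy back into the HJB, the common factor $\exp(-\gamma(rw + b_n))$ cancels on both sides and what survives is precisely the system \eqref{eq:bnsystem} in the unknowns $b_1,\dots,b_N$.

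Existence and uniqueness of the solution of \eqref{eq:bnsystem} is the step I expect to be the hardest, since it is a genuinely non-linear coupled system. Viewing the right-hand side of \eqref{eq:bnsystem} as a map $F:\R^N \to \R^N$, entry $F_n$ is monotone decreasing in $b_n$ and monotone increasing in each $b_{n'}$ with $n' \ne n$, because off-diagonal entries of $\Pi$ are nonnegative. Existence then follows from a Brouwer fixed-point argument on a bounded invariant rectangle, the existence of which is secured by the exponential coercivity of the non-linear term as any $b_n - b_{n'}$ becomes large in absolute value. For uniqueness, given two hypothetical solutions $b$ and $\tilde b$, I would pick $n^\ast$ maximizing $b_n - \tilde b_n$. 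Then $b_{n^\ast} - b_{n'} \ge \tilde b_{n^\ast} - \tilde b_{n'}$ for every $n'$, so each term $\pi_{n^\ast n'}\bigl(\e^{\gamma(b_{n^\ast} - b_{n'})} - \e^{\gamma(\tilde b_{n^\ast} - \tilde b_{n'})}\bigr)$ appearing in the difference of the equations for $b_{n^\ast}$ and $\tilde b_{n^\ast}$ is nonnegative, which forces $b_{n^\ast} - \tilde b_{n^\ast} \le 0$; symmetrizing gives $b = \tilde b$.

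The final step is verification. Given the candidate $(v_n)$ and any admissible $C \in \mathcal{C}_1$, I would apply the It\^o--Dynkin formula for Markov-modulated processes (noting that $W^C$ is absolutely continuous, so only the smooth drift and the pure-jump $J$-compensator contribute):
\[
\e^{-\rho t}v_{J_t}(W^C_t) - v_{J_0}(0) = \int_0^t \e^{-\rho s}\bigl[-\rho v_{J_s} + v'_{J_s}(rW^C_s + Y_s - C_s) + \textstyle\sum_{n'}\pi_{J_s,n'}v_{n'}\bigr](W^C_s)\,\diff s + M_t,
\]
with $M_t$ a mean-zero martingale. The HJB inequality yields the pointwise bound (integrand) $\le -\e^{-\rho s}u(C_s)$, with equality iff $C_s = rW^C_s + b_{J_s}$. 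Taking expectations and rearranging gives, for every $t \ge 0$, $v_{J_0}(0) \ge \E_0\int_0^t \e^{-\rho s}u(C_s)\,\diff s + \E_0[\e^{-\rho t}v_{J_t}(W^C_t)]$. Using $-v_n(w) \le K\e^{-\gamma r w}$ uniformly in $n$ for some finite $K$, the no-Ponzi condition \eqref{eq:budget} supplies a subsequence $t_k \to \infty$ along which the residual expectation vanishes (squeezed between this bound and $v \le 0$), and passing to the limit yields $U(C) \le v_{J_0}(0)$. To close the loop, I would verify that $C^\ast \in \mathcal{C}_1$ using the explicit linear dynamics $\dot W^\ast_t = y_{J_t} - b_{J_t}$ together with Proposition \ref{prop:asmussen} to control the Laplace transform of $W^\ast_t$, which forces the transversality term to vanish along $C^\ast$ and delivers $U(C^\ast) = v_{J_0}(0)$. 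Strict concavity of $u$, which makes the pointwise HJB maximizer unique in $c$, finally upgrades $U(C) \le U(C^\ast)$ to a strict inequality whenever $C \ne C^\ast$, giving uniqueness of the optimizer.
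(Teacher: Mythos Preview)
Your overall architecture matches the paper's: write the HJB system, guess the CARA-exponential value function $V^\ast_n(w)=-(\gamma r)^{-1}\e^{-\gamma(rw+b_n)}$, reduce to the nonlinear system \eqref{eq:bnsystem}, and then run a verification argument (the paper packages this as a citation to the Fleming--Soner verification theorem, which is exactly your Dynkin-formula computation). Two substantive differences are worth flagging.

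\emph{Existence and uniqueness of the $b_n$.} The paper does not use Brouwer. It introduces an auxiliary map $f(b)=(k+1)^{-1}(kb+F(b))$ with $k>0$ large enough that $f$ is \emph{increasing} in every coordinate on a suitable order interval $[u,v]$, checks $f$ is concave there, and then invokes a monotone--concave fixed-point theorem (Zhang) to get a unique fixed point of $f$, hence of $F$. Your uniqueness argument via the maximum of $b_n-\tilde b_n$ is correct and pleasantly elementary; it could replace the concavity half of the paper's argument. Your existence sketch, however, is thin: the map $F$ is decreasing in $b_n$ and increasing in the other coordinates, so an invariant rectangle for $F$ is not obtained just by checking corners, and ``exponential coercivity'' does not by itself produce one. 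You would need either the paper's monotonization trick or a more careful two-sided barrier construction.

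\emph{No-Ponzi for $C^\ast$.} This is the one place your sketch has a real gap. Saying ``use Proposition~\ref{prop:asmussen} to control the Laplace transform of $W^\ast_t$'' is the right starting point, but it reduces \eqref{eq:budget} to showing $\exp(tA)\to0$ for $A=-\rho I-\gamma r\,\diag(y-b)+\Pi$, i.e.\ $\zeta(A)<0$. This is not automatic and is exactly where the paper does something nontrivial: conjugating by the diagonal matrix $D=\diag(\e^{\gamma b_n})$ turns $A$ into a Metzler matrix whose row sums, \emph{by virtue of \eqref{eq:bnsystem} itself}, all equal $-r$, whence $\zeta(A)=\zeta(DAD^{-1})=-r<0$. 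Without this (or an equivalent) computation, you have not shown $C^\ast\in\mathcal{C}_1$, so the verification loop does not close.
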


We see from Proposition \ref{prop:optimum} that the agent consumes the interest earned on wealth, plus a quantity determined by the current income level. Combining \eqref{eq:ODE} and \eqref{eq:optimalc} reveals that over each interval $(t_k,t_{k+1})$ with $J_{t_k}=n$, the agent's wealth has slope $y_n-b_n$. Wealth is therefore a piecewise linear trend with slope determined by the current Markov state. This is a simple example of a Markov modulated \Levy process; we studied the two-state case in Example \ref{exmp:twostatelinear}.

\subsection{Partial and general equilibrium}\label{sec:equilibrium}

We now characterize the distribution of wealth in a partial or general equilibrium model with a unit mass of agents. We adopt the perpetual youth formulation of birth and death introduced by \citet{Yaari1965}, in which agents die at a constant Poisson rate $\phi>0$, returning their wealth or debt to a central bank, and are reborn with zero initial wealth and initial income state drawn from the stationary distribution $\varpi$. (The central bank may alternatively be viewed as an insurance company buying and selling actuarial notes, as in \citealp[p.~140]{Yaari1965}.) The stationary age distribution under this scheme is exponential with parameter $\phi$, which is the same as the distribution of an individual agent's lifespan $T$. Agents maximize their expected discounted flow of future utility. Since the mortality rate is constant, an agent's lifespan $T$ is independent of their income flow, so that with discount rate $\tilde{\rho}>0$ we have
\begin{equation*}
\E_0\int_0^T\e^{-\tilde{\rho}t}u(C_t)\diff t=\E_0\int_0^\infty\e^{-(\tilde{\rho}+\phi)t}u(C_t)\diff t.
\end{equation*}
The infinite horizon problem discussed in Section \ref{sec:singleagent} is therefore the same as the one solved by agents subject to mortality if we set $\rho=\tilde{\rho}+\phi$.

If agents have wealth processes that are independent copies of $W^\ast$, then the stationary distribution of wealth in the economy is the distribution of $W^\ast_T$, where $T$ is independent of $W^\ast$. This is the essential insight of \citet{reed2001}. From our discussion in Section \ref{sec:singleagent} we know that $W^\ast$ is a Markov-modulated \Levy process parametrized by $\varpi$, $\Pi$, $\Psi(z)$ and $\Upsilon(z)$, where $\Psi(z)$ is a diagonal matrix with $n$th diagonal entry $(y_n-b_n)z$, and $\Upsilon(z)$ has all entries equal to one. The corresponding matrix $\A(z)$ in \eqref{eq:defA} has diagonal entries $\A_{nn}(z)=(y_n-b_n)z+\pi_{nn}-\phi$ and off-diagonal entries $\A_{nn'}(z)=\pi_{nn'}$. In view of Propositions \ref{prop:convex} and \ref{prop:finiteT} and Remark \ref{rem:existence}, if $b_n<y_n$ for some $n$, then the equation $\zeta(\A(s))=0$ admits a unique positive solution $s=\alpha$. A similar statement applies to the existence of a unique negative solution. The following result is therefore an immediate consequence of Theorem \ref{thm:tailprob}.

\begin{prop}\label{prop:partialeq}
	If $b_n<y_n$ and $b_{n'}>y_{n'}$ for some $n,n'$, then the equation $\zeta(\A(s))=0$ has unique positive and negative solutions $s=\alpha,-\beta$, and the upper and lower tails of the stationary distribution of wealth decay exponentially at rates $\alpha$ and $\beta$ in the sense of \eqref{eq:mainup} and \eqref{eq:mainlow}.
\end{prop}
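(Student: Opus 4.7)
The plan is to verify all hypotheses of Theorem \ref{thm:tailprob} for the Markov-modulated \Levy process $W^\ast$ described in the paragraph preceding the proposition and then to apply that theorem directly. Irreducibility of $\Pi$ is built into the single-agent setup of Section \ref{sec:singleagent}, so the two substantive tasks are to confirm $\Pr(T<\infty)=1$ and to produce a positive and a negative solution to $\zeta(\A(s))=0$ in the interior of $\I$.

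For the first task I would invoke Proposition \ref{prop:finiteT}. With $\Phi=\phi I$, we have $\A(0)=\Pi-\phi I$, so $\zeta(\A(0))=\zeta(\Pi)-\phi$. Because $\Pi$ is an infinitesimal generator, the identity $\Pi 1_N=0$ shows that $0$ is an eigenvalue of the Metzler matrix $\Pi$, and the Metzler Perron-Frobenius theory recalled in Appendix \ref{subsec:metzler} forces $\zeta(\Pi)=0$. Hence $\zeta(\A(0))=-\phi<0$, and Proposition \ref{prop:finiteT} gives $\Pr(T<\infty)=1$.

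For the second task I would appeal to Remark \ref{rem:existence}. In this application each $\psi_n(s)=(y_n-b_n)s$ is linear and all jump MGFs are identically one, so $\I=\R$ and every quantity in sight is entire. Under the hypothesis $b_n<y_n$ the $n$-th diagonal entry $(y_n-b_n)s+\pi_{nn}-\phi$ of $\A(s)$ tends to $+\infty$ as $s\to+\infty$; the monotonicity bound $\zeta(\A(s))\ge\max_n(\psi_n(s)+\pi_{nn}-\phi_n)$ recorded in Remark \ref{rem:existence} then forces $\zeta(\A(s))\to\infty$, and convexity and continuity of $\zeta(\A(\cdot))$ from Proposition \ref{prop:convex}, combined with $\zeta(\A(0))<0$, yield a positive root $\alpha$ by the intermediate value theorem. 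The ``at most one'' clause of Proposition \ref{prop:convex} promotes this to uniqueness. The negative root $-\beta$ is produced symmetrically using the state $n'$ with $b_{n'}>y_{n'}$, so that $(y_{n'}-b_{n'})s\to+\infty$ as $s\to-\infty$.

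With both roots in hand and the irreducibility of $\Pi$ already given, Theorem \ref{thm:tailprob} applies verbatim and yields \eqref{eq:mainup} and \eqref{eq:mainlow} for the stationary distribution of wealth. There is no genuine obstacle in the argument; the only ingredient one might be tempted to verify by hand rather than by citation is $\zeta(\Pi)=0$, which I would dispatch by appeal to the Metzler-matrix results in Appendix \ref{subsec:metzler} rather than reprove.
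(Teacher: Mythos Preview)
Your proposal is correct and follows exactly the route the paper indicates: the paragraph immediately preceding Proposition~\ref{prop:partialeq} explains that the result is an immediate consequence of Theorem~\ref{thm:tailprob} once one invokes Propositions~\ref{prop:convex} and~\ref{prop:finiteT} together with Remark~\ref{rem:existence}, which is precisely what you do. The only refinement is that your verification of $\zeta(\Pi)=0$ can be cited directly as Theorem~\ref{thm:rowsums} rather than generic Perron--Frobenius theory.
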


The preceding analysis may be regarded as one of partial equilibrium because the interest rate $r$ was treated as exogenous. In a general equilibrium model, the interest rate needs to adjust so that the asset market clears. Changes in $r$ affect the system of equations \eqref{eq:bnsystem} determining $b_1,\dots,b_N$ in Proposition \ref{prop:optimum}. We therefore write $b_n=b_n(r)$, and set $b(r)=(b_1(r),\dots,b_N(r))^\top$ and $y=(y_1,\dots,y_N)^\top$. Since an individual agent's rate of saving or borrowing in state $n$ is $y_n-b_n$, and the proportion of agents in state $n$ is $\varpi_n$, the condition for general equilibrium is
\begin{equation}\label{eq:geneq}
	\varpi^\top(y-b(r))=0.
\end{equation}
Note that under condition \eqref{eq:geneq} the expected slope of $W^\ast$ at any given point in time is zero, implying that aggregate wealth -- that is, $\E(W^\ast_T)$, the expected value of a random draw from the stationary distribution of wealth -- is zero. Consequently, the net flow of assets returned by departing agents is zero.

Our final result, proved in Appendix \ref{subsec:proofapplication}, shows that general equilibrium exists, and that in general equilibrium the condition in Proposition \ref{prop:partialeq} for exponential tail decay is automatically satisfied when income fluctuates.
\begin{prop}\label{prop:geneq}
	There exists an interest rate $r>0$ such that \eqref{eq:geneq} is satisfied. For any such $r$, if $y_1,\dots,y_N$ take at least two distinct values, then $b_n(r)<y_n$ and $b_{n'}(r)>y_{n'}$ for some $n,n'$. Consequently, the tails of the stationary distribution of wealth decay exponentially as in Proposition \ref{prop:partialeq}.
\end{prop}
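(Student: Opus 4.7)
The proposition has three conclusions: existence of $r > 0$ with $\varpi^\top(y - b(r)) = 0$, the sign pattern $b_n(r) < y_n$ and $b_{n'}(r) > y_{n'}$ for some $n,n'$ whenever income varies, and the tail statement. The third follows immediately from Proposition \ref{prop:partialeq} once the first two are established, so the real work is in the existence and sign-pattern claims. My plan is an intermediate value theorem argument for existence, and a maximum/minimum contradiction for the sign pattern.

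\textbf{Existence.} I would begin by verifying that $r \mapsto b(r)$ is continuous on $(0,\infty)$ via the implicit function theorem applied to the nonlinear system \eqref{eq:bnsystem} (uniqueness of $b(r)$ being already supplied by Proposition \ref{prop:optimum}). Then I would multiply \eqref{eq:bnsystem} by $\gamma r$, take the $\varpi$-weighted sum, and use $\sum_{n'}\pi_{nn'} = 0$ together with stationarity $\varpi^\top\Pi = 0$ to rewrite the equilibrium condition as
\begin{equation*}
\gamma r\,\varpi^\top(y - b(r)) = r - \rho + D(r),\qquad D(r) = \sum_n \varpi_n \sum_{n' \neq n}\pi_{nn'}\bigl(e^{\gamma(b_n(r) - b_{n'}(r))} - 1\bigr).
\end{equation*}
Applying $e^x \geq 1 + x$ and $\varpi^\top\Pi = 0$ yields $D(r) \geq 0$. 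For the large-$r$ limit, inspection of \eqref{eq:bnsystem} forces $b_n(r) \to y_n - 1/\gamma$ uniformly in $n$, so $\varpi^\top(y - b(r)) \to 1/\gamma > 0$. For the small-$r$ limit, I would insert the ansatz $b_n(r) = B/r + g_n + o(1)$ into \eqref{eq:bnsystem} and match $O(1)$ terms: the vector $v$ with entries $v_n = e^{-\gamma g_n}$ must be a strictly positive right eigenvector of $\Pi$ with eigenvalue $\rho - \gamma B$. Since $\Pi$ is Metzler with $\Pi 1_N = 0$, the Perron-Frobenius type result of Appendix \ref{subsec:metzler} (with irreducibility) identifies $0$ as the unique eigenvalue admitting a strictly positive eigenvector, forcing $B = \rho/\gamma$ and hence $\varpi^\top(y - b(r)) \to -\infty$ as $r \to 0^+$. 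The intermediate value theorem then delivers the required zero.

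\textbf{Sign pattern and conclusion.} Fix an equilibrium $r$ and suppose income takes at least two distinct values. Since $\varpi$ has strictly positive entries (by irreducibility of $\Pi$), the condition $\varpi^\top(y - b(r)) = 0$ forces either both signs to appear in $(y_n - b_n(r))_n$, or every entry to vanish. I would exclude the latter by contradiction: substituting $b_n = y_n$ into \eqref{eq:bnsystem} gives the pointwise identity $r - \rho = -\sum_{n'}\pi_{nn'}e^{\gamma(y_n - y_{n'})}$ for every $n$. Let $\mathcal M$ be the set of states attaining $\max_k y_k$; by assumption $\mathcal M$ is a proper subset of $\mathcal N$, and by irreducibility of $\Pi$ there exist $n \in \mathcal M$ and $n' \notin \mathcal M$ with $\pi_{nn'} > 0$. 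For this $n$ the right-hand side is strictly negative, giving $r < \rho$. The symmetric argument applied to the argmin of $y$ yields $r > \rho$, a contradiction. Proposition \ref{prop:partialeq} then supplies the tail statement. The main obstacle is the small-$r$ asymptotic: to convert the ansatz into a genuine proof one must first show that the differences $b_n(r) - b_{n'}(r)$ remain bounded as $r \to 0^+$, so that the Perron-Frobenius argument is applicable in the limit; I would extract this boundedness from \eqref{eq:bnsystem} by estimating the exponential terms against neighboring $b_k$'s and using irreducibility, but the bookkeeping is delicate.
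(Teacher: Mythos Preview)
Your overall architecture matches the paper's: an intermediate value argument for existence, and a contradiction for the sign pattern. But the small-$r$ step contains a real gap, and you overwork the positivity step.

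\textbf{The small-$r$ gap.} Your ansatz $b_n(r)=B/r+g_n+o(1)$ is a formal calculation, and you correctly flag that turning it into a proof requires first showing the differences $b_n(r)-b_{n'}(r)$ remain bounded as $r\downarrow0$. The paper bypasses this entirely. In proving the uniqueness of $b(r)$ (Lemma~\ref{lem:fixedpoint}, which underlies Proposition~\ref{prop:optimum}), it constructs the fixed point inside the explicit order interval
\[
\Bigl[\min_ny_n-\tfrac{1}{\gamma}+\tfrac{\rho}{\gamma r},\ \max_ny_n-\tfrac{1}{\gamma}+\tfrac{\rho}{\gamma r}\Bigr]^N,
\]
from which $\min_nb_n(r)\to\infty$ as $r\downarrow0$ and continuity of $r\mapsto b(r)$ are both immediate. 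No asymptotic expansion or Perron--Frobenius argument is needed for this step; the bounds come for free from the fixed-point construction.

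\textbf{Positivity.} You derive the identity $\gamma r\,\varpi^\top(y-b(r))=r-\rho+D(r)$ with $D(r)\ge0$, and then discard it in favor of a large-$r$ limit. But the identity already gives $\varpi^\top(y-b(r))>0$ for every $r>\rho$; this is exactly what the paper does, and no limiting argument is required.

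\textbf{Sign pattern.} Here your argument genuinely differs from the paper's and is correct. The paper observes that $b(r)=y$ makes $v=(\e^{-\gamma y_n})_n$ a right eigenvector of $\Pi$ with eigenvalue $\rho-r$; pairing with the left eigenvector $\varpi$ forces $r=\rho$, and then algebraic simplicity of the zero eigenvalue (Theorem~\ref{thm:PFmetzler}) forces $v\propto1_N$, hence all $y_n$ equal. Your max/min argument reaches the same contradiction more elementarily, using only irreducibility of $\Pi$ and not the simplicity of the Perron eigenvalue. Either route is fine; yours is slightly more self-contained.
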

Note that Proposition \ref{prop:geneq} guarantees the existence of general equilibrium, but says nothing about uniqueness. \citet{Toda2017} shows that, in a discrete-time version of our model, there is a unique equilibrium if income follows a first-order autoregressive law of motion as in \citet{Wang2003}, but multiple equilibria may obtain under a number of other common time series models for income.

\citet{Moll2020} analyze the stationary distribution of wealth in a continuous-time heterogeneous-agent model in which, as in our model, the sole source of heterogeneity is bounded fluctuation in income. They establish that the support of the stationary distribution of wealth in their model is bounded from above. This is an instance of a more general principle established by \citet{StachurskiToda2019} in a discrete-time context: for a wide class of heterogeneous-agent models in which income fluctuation is the sole source of heterogeneity, the stationary distribution of wealth inherits the upper tail behavior (in this instance, boundedness) of income shocks. Contrary to this principle, we have established that the stationary distribution of wealth in our model has exponential (hence unbounded) upper tail even though income has finite (hence bounded) support. The critical feature of our model leading to this discrepancy is that agents have CARA utility, violating a bounded relative risk aversion condition imposed by \citet{StachurskiToda2019} and \citet{Moll2020}. Under CARA, the optimal consumption path obtained here in continuous-time and by \citet{Wang2003} in discrete-time leads to a nonstationary wealth path. We are thus forced to introduce random birth/death as in \citet{Yaari1965} so as to obtain a stationary distribution of wealth. Under bounded relative risk aversion, \citet{StachurskiToda2019} and \citet{Moll2020} obtain a stationary distribution of wealth even with infinitely lived agents.

The technical machinery developed in Section \ref{sec:results} of this article may be applied more generally to models in which the wealth path of an agent subject to random birth/death may be written as an increasing function of a Markov-modulated \Levy process. In a companion article, \citet{BeareToda2020} study a discrete-time model in which agents are exposed to fluctuations in capital income. Owing to the multiplicative nature of capital income, it turns out to be the logarithm of an agent's wealth path which is (the discrete-time counterpart to) a Markov-modulated \Levy process. The effect of the logarithm is to generate a stationary distribution of wealth with a Pareto upper tail. In the simpler economic model considered in this article, the absence of investment risk leads to a stationary distribution of wealth with an exponential upper tail, which may be less desirable from an empirical perspective.

\appendix

\section{Mathematical appendix}

\subsection{Exponential tails via the Laplace transform}\label{subsec:nakagawa}

A theorem of \citet{Nakagawa2007} is the key tool we use to prove Theorems \ref{thm:tailprob} and \ref{thm:exptail}. Nakagawa's theorem places bounds on tail probabilities depending on singularities of the associated Laplace transform. The relevant singularities for our problem are simple poles. In this Appendix we present a simplified version of Nakagawa's theorem that is sufficient to handle simple poles.

First we briefly review the Laplace transform. For more details see \cite*{Widder1941} and \citet[ch.~7]{Lukacs1970}. Given a real random variable $X$ with cumulative distribution function (CDF) $F(x)$, let $M(s)=\int_{-\infty}^\infty \e^{sx}\diff F(x)$ be the corresponding MGF, defined initially for all real $s$ such that the integral is finite. The set of all such $s$ forms an interval containing zero, possibly a singleton, which we denote $\I$. The right and left boundary points of this interval, which may be real or $\pm\infty$, are denoted $\alpha$ and $-\beta$ and called the right and left abscissae of convergence. We extend the domain of $M$ into the complex plane by setting $M(z)=\int_{-\infty}^\infty \e^{zx}\diff F(x)$ for all complex $z$ such that the integral is well-defined; this is precisely the set $\S=\set{z\in \C:\Re z\in\I}$. Using the dominated convergence theorem, it is easy to see that $M(z)$ is holomorphic on the interior of $\S$, which is called the strip of holomorphicity (Figure \ref{fig:tauberian}). The lines $\Re z=\alpha,-\beta$ comprising the boundary of $\S$ are called the right and left axes of convergence. Viewed as a function of a complex variable, the MGF $M(z)$ is called the Laplace transform of the CDF $F(x)$, or of the random variable $X$.

\begin{figure}
	\centering
	\begin{tikzpicture}[scale=4]
	\draw[black, thick, ->] (-1,0) -- (1,0);
	\draw[black, thick, ->] (0,-.5) -- (0,1.1);
	\node[below left] at (0,0) {$0$};
	\node[right,font=\small] at (0,1.1) {$\Im{z}$};
	\node[below,font=\small] at (1,0) {$\Re{z}$};
	\filldraw[blue] (0.8,0) circle (.75pt);
	\filldraw[blue] (-0.8,0) circle (.75pt);
	\draw[<-] (-.82,-.05) -- (-.9,-.25);
	\node[below left,font=\small, align=left] at (-.9,-.25) {Left abscissa of\\
		convergence};
	\draw[<-] (.82,-.05) -- (.9,-.25);
	\node[below right,font=\small, align=left] at (.9,-.25) {Right abscissa of\\
		convergence};
	\draw[blue, thick, <->] (-.8,-.5) -- (-.8,1.1);
	\node[left,font=\small] at (-.8,.55) {\rotatebox{90}{Left axis of convergence}};
	\draw[blue, thick, <->] (.8,-.5) -- (.8,1.1);
	\node[right,font=\small] at (.8,.55) {\rotatebox{-90}{Right axis of convergence}};
	\fill [blue,opacity=.1] (-.8,1.05) rectangle (.8,-.45);
	\node[blue, font=\small, align=left] at (-.4,.4) {Strip of\\holomorphicity};
	\end{tikzpicture}
	\caption{Strip of holomorphicity of the Laplace transform.}\label{fig:tauberian}
\end{figure}

There is a close relationship between the tail probabilities of a CDF and the abscissae of convergence of its Laplace transform. In general, we have
\begin{equation*}
\limsup_{x\to\infty}\frac{1}{x}\log\Pr(X>x)=-\alpha,\quad\limsup_{x\to \infty}\frac{1}{x}\log\Pr(X< -x)=-\beta\label{classictaub}
\end{equation*}
whenever the relevant abscissa is nonzero (\citealp[pp.~42--43, 241]{Widder1941}; \citealp[p.~194]{Lukacs1970}). Moreover, each abscissa is a singularity of the Laplace transform \citep[p.~58]{Widder1941}. Nakagawa's theorem establishes a tighter relationship between the abscissae of convergence and the tail probabilities that depends on the nature of the singularities at the abscissae, and the location of other singularities along the axes of convergence. For our purposes, it will be enough to consider the case where the singularities at the abscissae are simple poles. We state a result providing bounds for the right tail probabilities; an analogous result holds for the left tail.

\begin{thm}\label{thm:tauberian}
	Let $X$ be a real random variable and $M(z)$ its Laplace transform, with right abscissa of convergence $\alpha\in (0,\infty)$. Suppose that $M(z)$ can be meromorphically extended to an open set containing its right axis of convergence, with a simple pole at $\alpha$. Let $-C<0$ denote the residue of $M(z)$ at $\alpha$, and let $B>0$ denote the supremum of all $b>0$ such that $\alpha$ is the unique singularity of $M(z)$ on $\set{\alpha+it:t\in(-b,b)}$. Then if $B<\infty$ we have
	\begin{equation*}\label{eq:GVbound}
	\frac{2\pi C/B}{\e^{2\pi \alpha/B}-1}\le \liminf_{x\to\infty}\e^{\alpha x}\Pr(X>x)\le \limsup_{x\to\infty}\e^{\alpha x}\Pr(X>x)\le \frac{2\pi C/B}{1-\e^{-2\pi \alpha/B}},
	\end{equation*}
	and if $B=\infty$ we have
	\begin{equation*}\label{eq:GVboundUnique}
	\lim_{x\to\infty}\e^{\alpha x}\Pr(X>x)=\frac{C}{\alpha}.
	\end{equation*}
\end{thm}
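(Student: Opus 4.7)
The plan is to analyze the Laplace transform $L(s) \coloneqq \int_0^\infty \e^{-sx}\e^{\alpha x}\Pr(X>x)\,\diff x$ of the tilted survival function $\e^{\alpha x}\Pr(X>x)$ and extract asymptotics via singularity analysis of $L$. Assuming without loss of generality that $X\ge 0$ (translating if necessary, which rescales $C$ consistently with the claimed asymptotic equivalence), integration by parts on $M(z)=\int\e^{zy}\diff F(y)$ gives
\begin{equation*}
L(s)=\frac{M(\alpha-s)-1}{\alpha-s},\quad s>0.
\end{equation*}
The hypothesized meromorphic extension of $M$ across its right axis of convergence lifts, under the substitution $z=\alpha-s$, to a meromorphic extension of $L$ across the imaginary axis. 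The simple pole of $M$ at $\alpha$ with residue $-C$ translates to a simple pole of $L$ at $s=0$ with residue $C/\alpha$, and the assumption that $\alpha$ is the unique singularity of $M$ on $\{\alpha+it:|t|<B\}$ becomes the statement that $s=0$ is the unique singularity of $L$ on $\{it:|t|<B\}$.

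When $B=\infty$, I would invoke a Wiener--Ikehara Tauberian theorem in density form. Because $L$ extends meromorphically to a neighborhood of $\{\Re s\ge 0\}$ with $s=0$ as its only singularity on the critical line, and because $\e^{\alpha x}\Pr(X>x)$ is nonnegative while the factor $\Pr(X>x)$ is nonincreasing, the Tauberian hypotheses are satisfied after the standard device of sandwiching the tilted function between monotone envelopes built from $\Pr(X>x)$ and $\e^{\alpha x}$. The Tauberian theorem then delivers $\e^{\alpha x}\Pr(X>x)\to C/\alpha$.

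When $B<\infty$, the argument is more delicate and exploits the geometric-series form of the claimed constants: since
\begin{equation*}
\frac{2\pi C/B}{1-\e^{-2\pi\alpha/B}}=\frac{2\pi C}{B}\sum_{k\ge 0}\e^{-2\pi k\alpha/B}\quad\text{and}\quad\frac{2\pi C/B}{\e^{2\pi\alpha/B}-1}=\frac{2\pi C}{B}\sum_{k\ge 1}\e^{-2\pi k\alpha/B},
\end{equation*}
I would construct periodic majorants and minorants of $\e^{\alpha x}\Pr(X>x)-C/\alpha$ with period $2\pi/B$ via exponentially weighted translates of the form $\sum_{k\ge 0}\e^{-2\pi k\alpha/B}h(x+2\pi k/B)$, in the spirit of Poisson summation. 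The period $2\pi/B$ is chosen precisely so that the nontrivial harmonic frequencies $kB$ of the resulting Fourier series land outside the strip $|t|<B$ on which $L$ is analytic (after removing the pole at $0$). Monotonicity of $\Pr(X>x)$ then allows the oscillation of $\e^{\alpha x}\Pr(X>x)$ to be squeezed between these envelopes, yielding the stated bounds; as a consistency check, the bounds collapse to $C/\alpha$ in the limit $B\to\infty$, recovering the Wiener--Ikehara conclusion.

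The main obstacle will be rigorously carrying out the contour/Poisson-summation argument in the case $B<\infty$, since $L$ need not be integrable along vertical lines and the meromorphic extension is only guaranteed in a neighborhood of the right axis of convergence, not globally. I would handle this by first smoothing $\e^{\alpha x}\Pr(X>x)$ against an approximate identity, for which Bromwich-type inversion on a rectangular contour inside the region of meromorphy is justified, and then passing to the limit using monotonicity of $\Pr(X>x)$ to control the smoothing error. Once sharp bounds hold for smoothed approximants, the same monotonicity transfers them to the original tilted survival function.
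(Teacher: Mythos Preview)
The paper does not actually prove this theorem: its entire proof is a one-sentence citation of Theorem~5* of \citet{Nakagawa2007}, which gives more general bounds for poles of arbitrary order and of which the present statement is the simple-pole specialization. You are therefore attempting to reprove Nakagawa's result from scratch, which is far more than the paper undertakes.

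Your outline is broadly aligned with how Nakagawa's argument actually proceeds: pass to the Laplace transform $L(s)=(M(\alpha-s)-1)/(\alpha-s)$ of the tilted survival function, apply a Wiener--Ikehara argument when $B=\infty$, and use band-limited extremal majorants and minorants of Beurling--Selberg (Graham--Vaaler) type when $B<\infty$; the geometric-series form of the constants is exactly the fingerprint of those extremal functions, so your Poisson-summation intuition is on target. That said, several steps are not yet proofs. The reduction to $X\ge 0$ ``by translating if necessary'' fails when $X$ is unbounded below; the correct move is to split off $\E(\e^{zX}1(X<0))$, which is analytic on $\Re z\ge 0$ and hence does not affect the singularity structure near $\alpha$. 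The standard Wiener--Ikehara theorem requires monotonicity of the integrand, which $\e^{\alpha x}\Pr(X>x)$ lacks; the sandwiching device you allude to must be made precise via one-sided inequalities such as $f(x)\le \e^{\alpha h}f(x-h)$ that come from the monotonicity of $\Pr(X>x)$ alone. Most seriously, in the $B<\infty$ case your ``periodic majorants and minorants via exponentially weighted translates'' are, in effect, the Graham--Vaaler extremal functions, and establishing their construction, band-limitation, and optimality is a substantial piece of harmonic analysis that you would have to either reproduce or cite. For the purposes of this paper, the citation to Nakagawa is both sufficient and appropriate.
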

\begin{proof}
	The result follows from Theorem 5* of \citet{Nakagawa2007}, which provides more complicated bounds that obtain when the singularity at the right abscissa of convergence is a pole of arbitrary order.
\end{proof}

\subsection{Simple poles of matrix-valued functions}\label{subsec:schumacher}

In this appendix we state a result providing conditions under which a singularity of a matrix-valued function of a complex variable is a simple pole, and characterizing the residue of that pole. It is used to prove Theorems \ref{thm:tailprob} and \ref{thm:exptail}.

\begin{thm}\label{thm:matrixpencil}
	Let $A(z)$ be an $N\times N$ matrix-valued holomorphic function of $z\in\Omega$, where $\Omega$ is some open and connected subset of the complex plane. Suppose that $A(z)$ is invertible for some $z\in\Omega$. Then $A(z)$ has a meromorphic inverse $A(z)^{-1}$ on $\Omega$, with poles at the points of noninvertibility of $A(z)$. If $A(z_0)$ has rank $r<N$ for some $z_0\in\Omega$, so that $z_0$ is a pole of $A(z)^{-1}$, then the following conditions are equivalent.
	\begin{enumerate}
		\item\label{item:pencil1} $z_0$ is a simple pole of $A(z)^{-1}$.
		\item\label{item:pencil2} The zero eigenvalue of $A(z_0)$ has equal geometric and algebraic multiplicities.
		\item\label{item:pencil3} The $r\times r$ matrix $y^\top A'(z_0)x$ is invertible, where $x$ and $y$ can be any $N\times r$ matrices of full column rank such that $A(z_0)x=0$ and $y^\top A(z_0)=0$. 
	\end{enumerate}
	Under any of these equivalent conditions, the residue of $A(z)^{-1}$ at the simple pole $z_0$ is equal to $x(y^\top A'(z_0)x)^{-1}y^\top$.
\end{thm}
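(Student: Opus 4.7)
The plan is to reduce everything near a singular point $z_0$ to a single Schur complement that simultaneously controls the pole order, the local behavior of $\det A(z)$, and the explicit residue. Meromorphicity itself is immediate from Cramer's rule: the adjugate $\mathrm{adj}(A(z))$ is polynomial in the entries of $A(z)$, hence holomorphic on $\Omega$, while $\det A(z)$, being holomorphic on the connected set $\Omega$ and nonzero somewhere, has a discrete zero set by the identity theorem. So $A(z)^{-1}=\mathrm{adj}(A(z))/\det A(z)$ is meromorphic on $\Omega$ with poles precisely at the points where $A(z)$ loses rank.

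Next, fix such a pole $z_0$ and let $k=N-r=\dim\ker A(z_0)$. Using the rank hypothesis I would choose bases of the domain and codomain so that
\begin{equation*}
A(z)=\begin{pmatrix} A_{11}(z) & A_{12}(z) \\ A_{21}(z) & A_{22}(z)\end{pmatrix},
\end{equation*}
with blocks of sizes $k\times k$, $k\times r$, $r\times k$, $r\times r$, with $A_{22}(z_0)$ invertible, and with $A_{11},A_{12},A_{21}$ all vanishing at $z_0$. Since $A_{22}(z)$ remains invertible on a neighborhood of $z_0$, I can form the Schur complement $S(z)=A_{11}(z)-A_{12}(z)A_{22}(z)^{-1}A_{21}(z)$, whose Taylor expansion is $S(z)=(z-z_0)A_{11}'(z_0)+O((z-z_0)^2)$. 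Two identities then do all the work. First, the block-inversion formula expresses $A(z)^{-1}$ as a $2\times 2$ block matrix whose only possibly-singular entries involve $S(z)^{-1}$, so the pole order of $A(z)^{-1}$ at $z_0$ equals that of $S(z)^{-1}$. Second, $\det A(z)=\det A_{22}(z)\cdot\det S(z)$ with $\det A_{22}(z_0)\ne 0$, so the order of vanishing of $\det A(z)$ at $z_0$ equals that of $\det S(z)$. Finally, in this basis the columns of $x$ and $y$ can be taken to be the first $k$ standard basis vectors, giving $y^\top A'(z_0)x=A_{11}'(z_0)$; any other admissible choice differs by right multiplication by invertible $k\times k$ matrices and so preserves invertibility of $y^\top A'(z_0)x$.

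The three-way equivalence then follows quickly. For (3) $\Leftrightarrow$ (1): invertibility of $A_{11}'(z_0)$ writes $S(z)=(z-z_0)[A_{11}'(z_0)+O(z-z_0)]$ as $(z-z_0)$ times a function holomorphic and invertible near $z_0$, so $S(z)^{-1}$ has a simple pole and thus so does $A(z)^{-1}$. Conversely, if (1) holds, writing $A(z)^{-1}=R/(z-z_0)+B_0+\cdots$ and matching coefficients in $A(z)A(z)^{-1}=I$ and $A(z)^{-1}A(z)=I$ yields $A(z_0)R=RA(z_0)=0$ and $A(z_0)B_0+A'(z_0)R=I$, so $R=xCy^\top$ for some $k\times k$ matrix $C$; left-multiplying the order-zero relation by $y^\top$ and then right-multiplying by $y$ (using that $y^\top y$ is invertible) gives $y^\top A'(z_0)x\cdot C=I$, forcing $y^\top A'(z_0)x$ to be invertible with $C=(y^\top A'(z_0)x)^{-1}$, which is the claimed residue formula. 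For (3) $\Leftrightarrow$ (2): the expansion $\det S(z)=(z-z_0)^k\det A_{11}'(z_0)+O((z-z_0)^{k+1})$ shows that the order of vanishing of $\det A(z)$ at $z_0$ (the algebraic multiplicity of $z_0$ as a characteristic value of $A(z)$) equals the geometric multiplicity $k=\dim\ker A(z_0)$ exactly when $A_{11}'(z_0)$ is invertible, and strictly exceeds it otherwise.

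The main obstacle is conceptual: recognizing that a single small matrix, $A_{11}'(z_0)$, simultaneously controls three a priori independent quantities -- the pole order of $S(z)^{-1}$, the leading coefficient of $\det S(z)$, and (up to $k\times k$ conjugation) the intrinsic matrix $y^\top A'(z_0)x$ of condition (3). Once the block-basis reduction is in place, each implication reduces to elementary linear algebra and Laurent-coefficient bookkeeping.
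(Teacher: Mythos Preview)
Your argument is self-contained and quite different from the paper's, which simply cites \citet{Steinberg1968} for meromorphicity, \citet{Howland1971} for (i)$\Leftrightarrow$(ii), and \citet{Schumacher1986,Schumacher1991} for (i)$\Leftrightarrow$(iii) and the residue formula. Your Schur-complement reduction is a clean way to see everything at once: the block form makes $y^\top A'(z_0)x$ literally equal to $A_{11}'(z_0)$, and the factorization $S(z)=(z-z_0)[A_{11}'(z_0)+O(z-z_0)]$ immediately identifies both the pole order of $S(z)^{-1}$ and the leading term of $\det S(z)$. Your Laurent-series bookkeeping for (i)$\Rightarrow$(iii) and the residue is also correct; just note that the step ``right-multiply by $y$ and invert $y^\top y$'' is safe because you are working in the block basis where $y$ consists of standard basis vectors, so $y^\top y=I_k$ (for a general complex $y$ of full column rank, $y^\top y$ need not be invertible).

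There is one genuine discrepancy you should flag. The paper's condition (ii) speaks of the zero eigenvalue of the \emph{single matrix} $A(z_0)$ having equal geometric and algebraic multiplicities. What you actually prove equivalent to (i) and (iii) is a different statement: that the order of vanishing of $\det A(z)$ at $z_0$ equals $\dim\ker A(z_0)$, i.e.\ equality of algebraic and geometric multiplicities of $z_0$ as a \emph{characteristic value of the holomorphic family} $A(\cdot)$. These coincide for linear pencils $A(z)=zI-T$ but not in general: for $A(z)=\operatorname{diag}(z^2,1)$ at $z_0=0$, the matrix $A(0)=\operatorname{diag}(0,1)$ has a semisimple zero eigenvalue (so the paper's (ii) holds), yet $A(z)^{-1}$ has a double pole and $y^\top A'(0)x=0$. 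Your version of (ii) is the one that is genuinely equivalent to (i) and (iii); the literal reading of the paper's (ii) is not. In the paper's only application (the proof of Theorem~\ref{thm:tailprob}) the nullity is one and convexity of $s\mapsto\zeta(\A(s))$ forces $y^\top\A'(\alpha)x\neq0$, so the conclusion there is unaffected, but your write-up should make explicit which notion of ``algebraic multiplicity'' you are using.
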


\begin{proof}
Meromorphicity of $A(z)^{-1}$ when $A(z)$ is somewhere invertible was proved by \citet{Steinberg1968}. The equivalence of \ref{item:pencil1} and \ref{item:pencil2} was proved by \citet*{Howland1971}. The equivalence of \ref{item:pencil1} and \ref{item:pencil3} and the residue formula were proved by \citet*{Schumacher1986}; see also \citet*[pp.~562--563]{Schumacher1991}.
\end{proof}

\subsection{Spectral properties of Metzler matrices}\label{subsec:metzler}

A real square matrix is said to be Metzler if its off-diagonal entries are nonnegative. In this Appendix we present three results on Metzler matrices that are used repeatedly in Appendices \ref{subsec:proofresults} and \ref{subsec:proofapplication}. The first result shows that the well-known Perron-Frobenius theorem for nonnegative square matrices extends to Metzler matrices if we shift our focus from the spectral radius to the spectral abscissa.
\begin{thm}\label{thm:PFmetzler}
	Let $A$ be a Metzler matrix. Then $\zeta(A)$ is an eigenvalue of $A$, and there are corresponding left and right eigenvectors with nonnegative entries. If, in addition, $A$ is irreducible, then $\zeta(A)$ is an algebraically simple eigenvalue of $A$, and there are corresponding left and right eigenvectors with strictly positive entries.
\end{thm}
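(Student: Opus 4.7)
The plan is to reduce the statement to the classical Perron-Frobenius theorem for nonnegative matrices by means of a scalar shift, a standard trick for Metzler matrices. I would begin by choosing a real constant $c > 0$ large enough that every diagonal entry of $A + cI$ is nonnegative; since $A$ is Metzler, the off-diagonal entries are already nonnegative, so $B := A + cI$ is a nonnegative matrix. The eigenvalues of $B$ are exactly the eigenvalues of $A$ shifted by $c$, so the Jordan structure (algebraic and geometric multiplicities, left and right eigenspaces) of any eigenvalue $\lambda$ of $A$ coincides with that of $\lambda + c$ as an eigenvalue of $B$.

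Next I would invoke the Perron-Frobenius theorem for nonnegative matrices: the spectral radius $\rho(B)$ is itself an eigenvalue of $B$, and admits corresponding left and right eigenvectors with nonnegative entries. The main small computation is to identify $\rho(B)$ with $\zeta(A) + c$. Write $\rho(B) = \lambda_0 + c$ for some eigenvalue $\lambda_0$ of $A$; for any other eigenvalue $\mu$ of $A$, the inequality $|\mu + c| \le \rho(B) = \lambda_0 + c$ combined with $(\Re \mu + c)^2 \le |\mu + c|^2$ yields $\Re \mu + c \le \lambda_0 + c$ (using that $c$ is large enough for $\lambda_0 + c \ge 0$). Therefore $\lambda_0 = \zeta(A)$, and transferring back the eigenvectors from $B$ to $A$ gives the first claim: $\zeta(A)$ is an eigenvalue of $A$ with nonnegative left and right eigenvectors.

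For the second claim, I would observe that irreducibility is a property of the off-diagonal zero pattern of a matrix, which is invariant under adding a scalar multiple of the identity; hence $B = A + cI$ is irreducible whenever $A$ is. I would then apply the stronger Perron-Frobenius statement for irreducible nonnegative matrices, which asserts that $\rho(B)$ is an algebraically simple eigenvalue of $B$ with strictly positive left and right eigenvectors. Transferring back to $A$ via the shift gives the corresponding conclusion for $\zeta(A)$.

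The proof is short and the most delicate step is the identification $\rho(B) = \zeta(A) + c$, which requires ruling out the possibility that some complex eigenvalue of $B$ has larger modulus than $\zeta(A) + c$; as sketched, this follows from a one-line modulus-versus-real-part comparison once $c$ is chosen large enough to ensure $\zeta(A) + c \ge 0$. Everything else reduces cleanly to a citation of Perron-Frobenius in its classical and irreducible forms.
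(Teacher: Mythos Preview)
Your proposal is correct and takes essentially the same approach as the paper: the paper's proof consists of the single sentence that the result follows by applying the Perron--Frobenius theorem to $A+dI$ with $d\ge 0$ chosen large enough to make $A+dI$ nonnegative, together with a citation. Your write-up supplies the details the paper omits, including the identification $\rho(A+cI)=\zeta(A)+c$, and is sound.
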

\begin{proof}
	The result may be proved by applying the Perron-Frobenius theorem to the matrix $A+dI$, where $I$ is an identity matrix and $d\ge 0$ is chosen large enough to make $A+dI$ nonnegative. See, for example, Corollary 3.2 of \citet[ch.~4]{Smith1995}.
\end{proof}

The next result concerns Metzler matrices with constant row sums.
\begin{thm}\label{thm:rowsums}
If all row sums of a Metzler matrix $A$ are equal, then $\zeta(A)$ is equal to the common row sum.
\end{thm}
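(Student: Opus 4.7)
The plan is to exploit the fact that a constant-row-sum condition immediately exhibits the common row sum $c$ as an eigenvalue of $A$ with the all-ones vector as a right eigenvector, and then to use the existence of a nonnegative left Perron eigenvector for the spectral abscissa (guaranteed by Theorem \ref{thm:PFmetzler}) to match up the two eigenvalues.

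First I would observe that $A \mathbf{1}_N = c \mathbf{1}_N$, since the $n$th entry of $A \mathbf{1}_N$ is exactly the $n$th row sum of $A$. Hence $c$ is an eigenvalue of $A$ with strictly positive right eigenvector $\mathbf{1}_N$. From $c \in \sigma(A)$ we obtain at once the lower bound $c \le \zeta(A)$.

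Next, for the reverse inequality, I would invoke Theorem \ref{thm:PFmetzler} to produce a nonzero nonnegative left eigenvector $v \in \R^N$ with $v^\top A = \zeta(A) v^\top$. Computing $v^\top A \mathbf{1}_N$ in two ways gives
\begin{equation*}
\zeta(A)\, v^\top \mathbf{1}_N \;=\; v^\top A \mathbf{1}_N \;=\; c\, v^\top \mathbf{1}_N,
\end{equation*}
and since $v$ is nonnegative and nonzero, $v^\top \mathbf{1}_N = \sum_n v_n > 0$, so we may divide through to conclude $\zeta(A) = c$.

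There is essentially no obstacle here; the only substantive input is the existence of a nonnegative left eigenvector at $\zeta(A)$, which is already supplied by Theorem \ref{thm:PFmetzler}. No appeal to irreducibility is needed, since nonnegativity of $v$ together with positivity of $\mathbf{1}_N$ is enough to guarantee $v^\top \mathbf{1}_N > 0$.
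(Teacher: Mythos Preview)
Your argument is correct. The paper takes a different, more black-box route: it shifts $A$ by a large multiple of the identity to obtain a nonnegative matrix $A+dI$, and then cites Theorem 8.1.22 of Horn and Johnson (2013), which characterizes the spectral radius of a nonnegative matrix with constant row sums. Your proof is arguably cleaner in this context, since it stays entirely within the paper's own toolkit: Theorem~\ref{thm:PFmetzler} already guarantees a nonnegative left Perron eigenvector at $\zeta(A)$, and pairing it with the explicit right eigenvector $1_N$ at $c$ forces $\zeta(A)=c$ directly. The paper's approach has the virtue of brevity (a one-line citation), while yours has the virtue of being self-contained and making transparent exactly which spectral fact is doing the work. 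As a minor note, your preliminary observation that $c\le\zeta(A)$ is not actually needed, since the two-way computation of $v^\top A\,1_N$ already yields equality.
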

\begin{proof}
	The result may be proved by applying Theorem 8.1.22 of \citet{HornJohnson2013} to $A+dI$, where $d\ge 0$ is chosen large enough to make $A+dI$ nonnegative.
\end{proof}

We also require a monotonicity property of the spectral abscissa.
\begin{thm}\label{thm:strictmonotonicity}
	Let $A=(a_{nn'})$ be a Metzler matrix, and let $B=(b_{nn'})$ be a complex matrix of the same size. Suppose that the entries of $A$ and $B$ satisfy
	\begin{equation}\label{eq:DeutschIneq}
	\text{$\Re b_{nn}\le a_{nn}$ for all $n$, and $\abs{b_{nn'}}\le a_{nn'}$ for all $n\ne n'$.}
	\end{equation}
	Then $\zeta(B)\le\zeta(A)$. If, in addition, $A$ is irreducible and at least one of the inequalities in \eqref{eq:DeutschIneq} is strict, then $\zeta(B)<\zeta(A)$.
\end{thm}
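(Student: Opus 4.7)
The strategy is to lift the problem from matrices to matrix exponentials, where the Metzler property of $A$ translates into nonnegativity of $e^{tA}$ and the question reduces to classical Perron--Frobenius/Wielandt comparisons for nonnegative matrices. The key lemma to establish is the entry-wise domination $|(e^{tB})_{ij}|\le(e^{tA})_{ij}$ for every $i,j$ and every $t\ge 0$. Once that is in hand, the spectral mapping theorem $\rho(e^{tM})=\e^{t\zeta(M)}$ together with Wielandt's monotonicity of the spectral radius closes the argument.

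First I would establish the entry-wise bound via a Gronwall-type comparison. Fix a standard basis vector $e_k$ and set $u(t)=e^{tB}e_k$, so that $u'(t)=Bu(t)$. A direct calculation gives
\begin{equation*}
\tfrac{d}{dt}|u_i|^2=2\Re(\bar u_i u_i')=2(\Re b_{ii})|u_i|^2+2\sum_{j\ne i}\Re(\bar u_i b_{ij}u_j)\le 2|u_i|(A|u|)_i,
\end{equation*}
using $\Re b_{ii}\le a_{ii}$, $|b_{ij}|\le a_{ij}$ for $j\ne i$, and $a_{ij}\ge 0$. Dividing by $2|u_i|$ (where positive) yields the componentwise differential inequality $\tfrac{d}{dt}|u|\le A|u|$. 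Because $A$ is Metzler, $e^{tA}$ preserves the nonnegative orthant, and the standard comparison principle gives $|u(t)|\le e^{tA}|u(0)|=e^{tA}e_k$ componentwise. Running this over $k=1,\dots,N$ delivers $|e^{tB}|\le e^{tA}$ entrywise. The technical wrinkle is that $|u_i|$ is merely Lipschitz at zeros of $u_i$; this is handled in the usual way by working with $|u_i|^2+\varepsilon$ and sending $\varepsilon\downarrow 0$, or by exploiting analyticity of $u_i$ to restrict attention to the complement of an isolated set of zeros.

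With the domination in place, Wielandt's monotonicity of the spectral radius for nonnegative matrices gives $\rho(e^{tB})\le\rho(|e^{tB}|)\le\rho(e^{tA})$ for every $t\ge 0$. By Theorem \ref{thm:PFmetzler}, $\zeta(A)$ is a real eigenvalue of $A$, and the spectral mapping theorem yields $\rho(e^{tA})=\e^{t\zeta(A)}$; likewise $\rho(e^{tB})=\e^{t\zeta(B)}$ in general. Taking $t>0$ and logarithms returns $\zeta(B)\le\zeta(A)$.

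For the strict inequality, assume $A$ is irreducible and that at least one of the inequalities in \eqref{eq:DeutschIneq} is strict. Choosing $d>0$ large enough that $A+dI$ is irreducible nonnegative, a standard Perron--Frobenius argument shows $e^{t(A+dI)}$ has all entries strictly positive for each $t>0$, so $e^{tA}$ itself is irreducible. It remains to identify a single entry where the domination is strict for some small $t>0$. If $\Re b_{nn}<a_{nn}$, the Taylor expansion gives $|(e^{tB})_{nn}|=1+t\Re b_{nn}+O(t^2)<1+ta_{nn}+O(t^2)=(e^{tA})_{nn}$; if $|b_{nn'}|<a_{nn'}$ for some $n\ne n'$, one has analogously $|(e^{tB})_{nn'}|=t|b_{nn'}|+O(t^2)<(e^{tA})_{nn'}$. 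The strict form of Wielandt's inequality for irreducible nonnegative matrices then yields $\rho(|e^{tB}|)<\rho(e^{tA})$, hence $\zeta(B)<\zeta(A)$. The main obstacle throughout is the first step: making the Gronwall comparison rigorous in the presence of the non-differentiability of $|u_i|$ at zeros of $u_i$; every subsequent step is a direct invocation of Perron--Frobenius/Wielandt theory after the shift $A+dI$.
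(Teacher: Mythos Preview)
Your argument is correct and takes a genuinely different route from the paper's. The paper works directly at the matrix level: it introduces the real Metzler matrix $C$ with diagonal entries $c_{nn}=\Re b_{nn}$ and off-diagonal entries $c_{nn'}=|b_{nn'}|$, invokes a result of \citet{Deutsch1975} to obtain $\zeta(B)\le\zeta(C)$, and then applies standard Perron--Frobenius monotonicity (after shifting by $dI$ into the nonnegative cone, via results in \citealp{HornJohnson2013}) to get $\zeta(C)\le\zeta(A)$, with the strict case handled by the corresponding strict Perron--Frobenius comparison. Your approach instead passes to matrix exponentials: the Gronwall-type comparison delivers the entrywise bound $|e^{tB}|\le e^{tA}$, and the spectral mapping identity $\rho(e^{tM})=e^{t\zeta(M)}$ reduces everything to the Wielandt comparison for nonnegative matrices. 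The paper's proof is much shorter because the complex-to-real step is outsourced to Deutsch's lemma; yours is more self-contained and makes the underlying monotone dynamics visible, at the price of the differential-inequality technicality you correctly flag (and which is indeed routine to dispose of by either device you suggest, with the analyticity route being the cleaner of the two).
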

\begin{proof}
	Let $C=(c_{nn'})$ be the Metzler matrix with diagonal entries $c_{nn}=\Re b_{nn}$ and off-diagonal entries $c_{nn'}=\abs{b_{nn'}}$. The weak inequality $\zeta(B)\le\zeta(C)$ was established in Corollary 1 of \citet{Deutsch1975}. Since $C\le A$, we may obtain $\zeta(C)\le\zeta(A)$ by applying Corollary 8.1.9 of \citet{HornJohnson2013} to the matrices $A+dI$ and $C+dI$, where $d\ge 0$ is chosen large enough to make both matrices nonnegative. When $A$ is irreducible and at least one of the inequalities in \eqref{eq:DeutschIneq} holds strictly, we may obtain $\zeta(C)<\zeta(A)$ by applying Theorem 8.4.5 of \citet{HornJohnson2013} in the same fashion to show that if $\zeta(C)=\zeta(A)$ then $C=A$.
\end{proof}

\subsection{Proofs of results in Section \ref{sec:results}}\label{subsec:proofresults}

\begin{proof}[Proof of Proposition \ref{prop:convex}]
	The off-diagonal entries of $\A(s)$ are log-convex functions of $s$ (since each is a positively scaled MGF), while the diagonal entries of $\A(s)$ are convex functions of $s$ (since each is a log-MGF shifted by a constant). Theorem 1.1 of \cite{Nussbaum1986} therefore implies that $\zeta(\A(s))$ is a convex function of $s$.
	
	Since $\A(0)\le\Pi$, Theorem \ref{thm:strictmonotonicity} implies that $\zeta(\A(0))\le\zeta(\Pi)$. Theorem \ref{thm:rowsums} implies that $\zeta(\Pi)=0$. Thus $\zeta(\A(0))\le 0$. The last sentence of the proposition is an obvious consequence of the convexity of $\zeta(\A(s))$.
\end{proof}

\begin{proof}[Proof of Proposition \ref{prop:finiteT}]
	It will be convenient to augment our state space with an absorbing state into which we transition at time $T$. To this end, let $\widetilde{\mathcal{N}}=\set{1,\dots,N+1}$, and for $t\ge 0$ let
	\begin{equation*}	
	\widetilde{J}_t=\begin{cases*}
	J_t& if $t<T$\\
	N+1& if $t\ge T$.
	\end{cases*}
	\end{equation*}
	The process $\widetilde{J}=(\widetilde{J}_t)_{t\ge 0}$ is a time-homogeneous Markov process with state space $\widetilde{\mathcal{N}}$. The $(N+1)\times1$ vector of initial state probabilities $\widetilde{\varpi}$ and $(N+1)\times(N+1)$ infinitesimal generator matrix $\widetilde{\Pi}$ for $\widetilde{J}$ are given by
	\begin{equation*}	
	\widetilde{\varpi}=\begin{bmatrix}\varpi\\0\end{bmatrix},\quad\widetilde{\Pi}=\begin{bmatrix}
	\Pi-\Phi&-(\Pi-\Phi)1_N\\
	0_N^\top&0\end{bmatrix}=\begin{bmatrix}
	\A(0)&-\A(0)1_N\\0_N^\top&0
	\end{bmatrix},
	\end{equation*}
	where $0_N$ (respectively, $1_N$) denotes an $N\times1$ vector with each entry equal to zero (respectively, one). For $n\in\mathcal{N}$, let $p_n=\Pr(T<\infty\mid J_0=n)$, the $n$th absorption probability for state $N+1$. (We may set $p_n=1$ for any state $n$ such that $P(J_0=n)=0$.) Define the vector $p=(p_1,\dots,p_N)^\top$. Clearly $\Pr(T<\infty)=1$ if and only if $p=1_N$. We will show that the conditions $p=1_N$ and $\zeta(\A(0))<0$ are both equivalent to $\A(0)$ being invertible.
	
	We first show that $p=1_N$ if and only if $\A(0)$ is invertible. Theorem 3.3.1 of \citet{Norris1997} provides the following characterization of absorption probabilities: the system of linear equations
	\begin{equation}\label{eq:Norris}
	\A(0)x=\A(0)1_N
	\end{equation}
	admits a minimal nonnegative solution, which is $x=p$. If $\A(0)$ is invertible then $x=1_N$ is the unique solution to \eqref{eq:Norris}, and so $p=1_N$. If $\A(0)$ is not invertible then the set of solutions to \eqref{eq:Norris} forms a hyperplane, so that the minimal nonnegative solution must have at least one entry equal to zero, implying that $p\ne 1_N$.
	
	We next show that $\A(0)$ is invertible if and only if $\zeta(\A(0))<0$. Since $\A(0)$ is Metzler, we know that its spectral abscissa is an eigenvalue (Theorem \ref{thm:PFmetzler}), and so it suffices to show that $\zeta(\A(0))\le 0$. This was established in Proposition \ref{prop:convex}.
\end{proof}

Our proofs of Proposition \ref{prop:Sminus} and Theorem \ref{thm:exptail} rely on the following lemma, which resembles a well-known result on characteristic functions.

\begin{lem}\label{lem:ridge}
	Let $X$ be a real random variable and $M(z)=\E(\e^{zX})$ its Laplace transform, with right abscissa of convergence $\alpha>0$. Then for any $s\in[0,\alpha)$ we have $\abs{M(s+it)}\le M(s)$ for all real $t$, with strict inequality for all real $t\ne 0$ if and only if the distribution of $X$ is not supported on a lattice.
\end{lem}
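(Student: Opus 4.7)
The plan is to reduce the claim to the classical inequality $|\phi(t)|\le 1$ for characteristic functions by means of an exponential (Esscher) tilting. Fix $s\in[0,\alpha)$. Since the set of real arguments at which the MGF is finite is an interval containing zero with right endpoint $\alpha$, we have $M(s)\in(0,\infty)$. Define a probability measure $\widetilde{\Pr}$ on the underlying sample space by
\begin{equation*}
\frac{\diff\widetilde{\Pr}}{\diff\Pr}=\frac{\e^{sX}}{M(s)},
\end{equation*}
which is strictly positive and integrates to one. Because the Radon--Nikodym derivative is everywhere strictly positive, $\widetilde{\Pr}$ and $\Pr$ are mutually absolutely continuous, and in particular $X$ has the same support under each.

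Next I would compute the characteristic function of $X$ under $\widetilde{\Pr}$. A direct calculation gives
\begin{equation*}
\widetilde{\phi}(t)\coloneqq\widetilde{\E}(\e^{\ii tX})=\E\bigl(\e^{\ii tX}\cdot \e^{sX}/M(s)\bigr)=\frac{M(s+\ii t)}{M(s)}.
\end{equation*}
Since $\widetilde{\phi}$ is a characteristic function, the usual bound $|\widetilde{\phi}(t)|\le 1$ holds for all real $t$, which immediately yields $|M(s+\ii t)|\le M(s)$, establishing the weak inequality.

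For the equivalence with the non-lattice condition I would invoke the classical characterization of characteristic functions (e.g., the standard result appearing as Theorem 3.5.1 in \citealp{Lukacs1970}): $|\widetilde{\phi}(t)|=1$ for some $t\ne 0$ if and only if the distribution of $X$ under $\widetilde{\Pr}$ is supported on a lattice. By the mutual absolute continuity noted above, this occurs if and only if the distribution of $X$ under $\Pr$ is supported on a lattice. Negating this gives: strict inequality $|M(s+\ii t)|<M(s)$ for every $t\ne 0$ holds if and only if the distribution of $X$ is non-lattice.

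The only step requiring care is the appeal to equivalence of supports under the two measures, but because the tilting density $\e^{sX}/M(s)$ is strictly positive pointwise, the two measures assign probability zero to exactly the same Borel sets, so the lattice property transfers in both directions without issue. No other step appears to be a real obstacle.
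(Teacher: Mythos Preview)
Your argument is correct. The paper's own proof is essentially a pair of citations: the weak inequality is taken directly from the ridge property for analytic characteristic functions (Theorem 7.1.2 of \citealp{Lukacs1970}), and the strict-inequality characterization is obtained by an ``obvious modification'' of the proof of Theorem 2.1.4 of \citealp{Lukacs1970} (the standard $|\phi(t)|=1$ iff lattice result). You take a slightly different and arguably cleaner route: instead of modifying Lukacs's proof to handle the shifted argument $s+\ii t$, you Esscher-tilt by $\e^{sX}/M(s)$ so that $M(s+\ii t)/M(s)$ becomes a genuine characteristic function, and then invoke the lattice characterization directly rather than adapting its proof. The mutual absolute continuity of the tilted and original measures, which you correctly note follows from strict positivity of the tilting density, is exactly what is needed to transfer the lattice property between them. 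The two approaches are close in spirit, but yours makes the reduction explicit and avoids asking the reader to rework a textbook proof.
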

\begin{proof}
	The inequality $\abs{M(s+it)}\le M(s)$ is part of Theorem 7.1.2 of \citet{Lukacs1970}, and the remainder of the lemma follows from an obvious modification to the proof of Theorem 2.1.4 of \citet{Lukacs1970}.
\end{proof}

\begin{proof}[Proof of Proposition \ref{prop:Sminus}]
	Fix $z\in\S_-$, and let $s=\Re z\in\I_-$. The diagonal entries of $\A(z)$ and $\A(s)$ are $\A_{nn}(z)=\psi_n(z)+\pi_{nn}-\phi_n$ and $\A_{nn}(s)=\psi_n(s)+\pi_{nn}-\phi_n$, and thus by Lemma \ref{lem:ridge} satisfy $\abs{\exp\A_{nn}(z)}\le\exp\A_{nn}(s)$, which we may rewrite as
	$\Re\A_{nn}(z)\le\A_{nn}(s)$. The off-diagonal entries of $\A(z)$ and $\A(s)$ are $\A_{nn'}(z)=\pi_{nn'}\upsilon_{nn'}(z)$ and $\A_{nn'}(s)=\pi_{nn'}\upsilon_{nn'}(s)$, and thus by Lemma \ref{lem:ridge} satisfy $\abs{\A_{nn'}(z)}\le\A_{nn'}(s)$. Since $\A(s)$ is Metzler, we deduce from Theorem \ref{thm:strictmonotonicity} that $\zeta(\A(z))\le\zeta(\A(s))$. We conclude by observing that $\zeta(\A(s))<0$ due to the definition of $\I_-$.
\end{proof}

\begin{proof}[Proof of Proposition \ref{prop:MGF}]
	As in the proof of Proposition \ref{prop:finiteT}, we employ the augmented state space $\widetilde{\mathcal{N}}$. For $t\ge 0$, let $\widetilde{W}_t=W_{T\wedge t}$, and note that
	\begin{equation}\label{eq:MGF1}
	\e^{z\widetilde{W}_t}1(\widetilde{J}_t=N+1)=\e^{zW_T}1(T\le t).
	\end{equation}
	The process $\widetilde{W}=(\widetilde{W}_t)_{t\ge 0}$ is a Markov-modulated \Levy process with Markov-modulator $\widetilde{J}$. The law of $\widetilde{W}$ conditional on $\widetilde{J}$ is parametrized by the $(N+1)\times(N+1)$ matrices
	\begin{equation*}	
	\widetilde{\Psi}(z)=\begin{bmatrix}
	\Psi(z)&0_N\\
	0_N^\top&0\end{bmatrix},\quad
	\widetilde{\Upsilon}(z)=\begin{bmatrix}
	\Upsilon(z)&1_N\\
	1_N^\top&1\end{bmatrix}.
	\end{equation*}
	Fix $n\in\mathcal{N}$ such that $\Pr(J_0=n)>0$. By \eqref{eq:MGF1} and Proposition \ref{prop:asmussen}, for any $t>0$, the conditional expectation $\E(\e^{zW_T}1(T\le t)\mid \widetilde{J}_0=n)$ is equal to the entry in row $n$, column $N+1$ of the matrix
	\begin{equation*}	
	\exp(t(\widetilde{\Psi}(z)+\widetilde{\Pi}\odot\widetilde{\Upsilon}(z)))=\exp\left(t\begin{bmatrix}\A(z)&-\A(0)1_N\\0_N^\top&0\end{bmatrix}\right).
	\end{equation*}
	Applying Lemma 10.5.1 of \citet{ChenFrancis1995}, which provides a formula for the exponential of a block upper-triangular matrix, and using the fact that $\A(z)$ is invertible for $z\in\S_-$ (Proposition \ref{prop:Sminus}), we find that $\E(\e^{zW_T}1(T\le t)\mid \widetilde{J}_0=n)$ is equal to the entry in row $n$ of the vector
	\begin{align*}
	-\int_0^t\exp(s\A(z))\A(0)1_N\diff s=-\A(z)^{-1}(\exp(t\A(z))-I)\A(0)1_N.
	\end{align*}
	Since $\zeta(\A(z))<0$ for all $z\in\S_-$ (Proposition \ref{prop:Sminus}), we have $\exp(t\A(z))\to 0$ as $t\to\infty$, and so $\E(\e^{zW_T}1(T\le t)\mid \widetilde{J}_0=n)$ converges to the entry in row $n$ of the vector $\A(z)^{-1}\A(0)1_N$ as $t\to\infty$.
	
	It remains only to show that
	\begin{equation}\label{eq:MGF2}
	\E(\e^{zW_T}1(T\le t)\mid J_0=n)\to\E(\e^{zW_T}\mid J_0=n)\text{ as }t\to\infty.
	\end{equation}
	Since $\Pr(T<\infty)=1$, we have $\Pr(\e^{zW_T}1(T\le t)\to\e^{zW_T}\mid J_0=n)=1$. For real $z=s\in\I_-$, we may thus obtain \eqref{eq:MGF2} by applying the monotone convergence theorem. In view of what was shown in the previous paragraph, this means that $\E(\e^{sW_T}\mid J_0=n)$ is the entry in row $n$ of the vector $\A(s)^{-1}\A(0)1_N$, which is finite. We may thus obtain \eqref{eq:MGF2} for any complex $z\in\S_-$ by applying the dominated convergence theorem, using $\e^{(\Re z)W_T}$ as the dominating function.
\end{proof}

\begin{proof}[Proof of Theorem \ref{thm:tailprob}]
	We will consider the case where the equation $\zeta(\A(s))=0$ admits a positive solution $s=\alpha$ in the interior of $\I$. The case of a negative solution may be handled by a symmetric argument.
	
	Let $\Omega$ denote the interior of $\S$, an open and connected subset of the complex plane, nonempty since it contains $\alpha$. The matrix-valued function $\A(z)$ is holomorphic on $\Omega$. Since $\zeta(\A(s))$ is a convex function of $s\in\I$ (Proposition \ref{prop:convex}), with $\zeta(\A(0))<0$ (Proposition \ref{prop:finiteT}) and $\zeta(\A(\alpha))=0$, it must be the case that $\A(z)$ is invertible at all real numbers between $0$ and $\alpha$, which are elements of $\Omega$.  It follows from Theorem \ref{thm:matrixpencil} that $\A(z)$ has a meromorphic inverse $\A(z)^{-1}$ on $\Omega$, with poles at the points of noninvertibility of $\A(z)$. Since $\A(\alpha)$ is Metzler, we know that its spectral abscissa of zero is an eigenvalue (Theorem \ref{thm:PFmetzler}). Thus $\A(z)$ is not invertible at $\alpha\in\Omega$, and $\alpha$ is a pole of $\A(z)^{-1}$.
	
	We now show that $\alpha$ is a simple pole of $\A(z)^{-1}$, and determine the associated residue. Since $\A(\alpha)$ is irreducible Metzler, it follows from Theorem \ref{thm:PFmetzler} that the zero eigenvalue of $\A(\alpha)$ is algebraically simple, hence geometrically simple, and is associated with left and right eigenvectors $x,y$ with strictly positive entries. From Theorem \ref{thm:matrixpencil} we thus deduce that $\alpha$ is a simple pole of $\A(z)^{-1}$, with residue given by
	$$R = x (y^\top \A'(\alpha)x)^{-1}y^\top = cxy^\top,$$
	where $c=(y^\top \A'(\alpha)x)^{-1}$ is a nonzero real number.
	
	By Proposition \ref{prop:MGF}, the MGF of $W_T$ is given by $M_{W_T}(z)=\varpi^\top\A(z)^{-1}\A(0)1_N$ for $z\in\S_-$. Moreover, since $\A(z)^{-1}$ is meromorphic on $\Omega$, this equation defines a meromorphic extension of $M_{W_T}(z)$ to $\Omega$. As $z\to\alpha$, we obtain
	\begin{equation*}
	(z-\alpha)M_{W_T}(z)=\varpi^\top (z-\alpha)\A(z)^{-1}\A(0)1_N \to \varpi^\top cxy^\top\A(0)1_N=c(\varpi^\top x)(y^\top \A(0)1_N).
	\end{equation*}
	The row sums of $\A(0)$ are bounded by those of $\Pi$, which are zero. Since $\A(0)$ is invertible, not all rows sums are zero. Thus $\A(0)1_N$ has nonpositive entries, with at least one strictly negative entry. The right eigenvector $y$ has strictly positive entries, so $y^\top\A(0)1_N<0$. Similarly, $\varpi$ has nonnegative entries, with at least one strictly positive entry, and the left eigenvector $x$ has strictly positive entries, so $\varpi^\top x>0$. As noted above, $c\ne 0$. Therefore $\lim_{z\to\alpha}(z-\alpha)M_{W_T}(z)\ne 0$. This shows that $\alpha$ is a simple pole of $M_{W_T}(z)$. It now follows from Theorem \ref{thm:tauberian} that the limits inferior and superior of $\e^{\alpha w}\Pr(W_T>w)$ as $w\to\infty$ are strictly positive and finite.
\end{proof}

\begin{proof}[Proof of Theorem \ref{thm:exptail}]
	It suffices for us to show that $\alpha$ is the unique singularity of $M_{W_T}(z)$ on its axis of convergence, because in this case the application of Theorem \ref{thm:tauberian} at the end of the proof of Theorem \ref{thm:tailprob} establishes that the limits inferior and superior of $\e^{\alpha w}\Pr(W_T>w)$ as $w\to\infty$ are equal. We will do this by showing that $\zeta(\A(\alpha+it))<0$ for all real $t\ne 0$. The diagonal entries of $\A(\alpha+it)$ and $\A(\alpha)$ are $\A_{nn}(\alpha+it)=\psi_n(\alpha+it)+\pi_{nn}-\phi_n$ and $\A_{nn}(\alpha)=\psi_n(\alpha)+\pi_{nn}-\phi_n$, and thus by Lemma \ref{lem:ridge} satisfy $\abs{\exp\A_{nn}(\alpha+it)}\le\exp\A_{nn}(\alpha)$, which we may rewrite as
	\begin{equation}\label{eq:Adiagineq}
	\Re\A_{nn}(\alpha+it)\le\A_{nn}(\alpha).
	\end{equation}
	The off-diagonal entries of $\A(\alpha+it)$ and $\A(\alpha)$ are $\A_{nn'}(\alpha+it)=\pi_{nn'}\upsilon_{nn'}(\alpha+it)$ and $\A_{nn'}(\alpha)=\pi_{nn'}\upsilon_{nn'}(\alpha)$, and thus by Lemma \ref{lem:ridge} satisfy
	\begin{equation}\label{eq:Aoffdiagineq}
	\abs{\A_{nn'}(\alpha+it)}\le\A_{nn'}(\alpha).
	\end{equation}
	Moreover, under our non-lattice condition, Lemma \ref{lem:ridge} implies that at least one of the inequalities in \eqref{eq:Adiagineq} or \eqref{eq:Aoffdiagineq} must hold strictly for all real $t\ne 0$. Noting that $\A(\alpha)$ is irreducible Metzler, we see that $\A(\alpha+it)$ and $\A(\alpha)$ satisfy the conditions placed upon $B$ and $A$ in Theorem \ref{thm:strictmonotonicity}, and deduce that $\zeta(\A(\alpha+it))<\zeta(\A(\alpha))=0$ for all real $t\ne 0$.
\end{proof}

\subsection{Proofs of results in Section \ref{sec:application}}\label{subsec:proofapplication}

\begin{lem}\label{lem:fixedpoint}
	The system of equations \eqref{eq:bnsystem} has a unique solution $(b_1,\dots,b_N)\in\R^N$. The solution varies continuously with $r>0$, with $\min_nb_n\to\infty$ as $r$ decreases to zero.
\end{lem}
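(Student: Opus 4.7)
The plan rests on a common observation: differences of exponentials in \eqref{eq:bnsystem} can be linearized via the mean value theorem into a generator matrix (Metzler with zero row sums), whose spectral abscissa is zero by Theorem \ref{thm:rowsums}. I exploit this once for uniqueness, once for invertibility of the Jacobian of the defining map, and then combine them with an a priori bound to obtain existence via continuation.

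For uniqueness, suppose $b$ and $\tilde b$ are two solutions and set $\delta = b - \tilde b$. Subtracting the two systems and applying the mean value theorem to $s \mapsto \e^{\gamma s}$, and noting that the $n' = n$ term vanishes identically, we obtain for suitable reals $\xi_{nn'}$
\begin{equation*}
r\delta_n = -\sum_{n' \ne n}\pi_{nn'}\e^{\gamma \xi_{nn'}}(\delta_n - \delta_{n'}), \quad n \in \mathcal{N}.
\end{equation*}
Defining $\tilde\pi_{nn'} = \pi_{nn'}\e^{\gamma \xi_{nn'}}$ for $n' \ne n$ and $\tilde\pi_{nn} = -\sum_{n' \ne n}\tilde\pi_{nn'}$ produces a generator matrix $\tilde\Pi$, and the display rearranges to $r\delta = \tilde\Pi\delta$. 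Since $\zeta(\tilde\Pi) = 0 < r$, $r$ is not an eigenvalue of $\tilde\Pi$, forcing $\delta = 0$. The same linearization, applied to the Jacobian of the defining map rather than to a difference of two solutions, gives the representation $J(b) = I - \tilde\Pi(b)/r$ at every $b$, where $\tilde\Pi(b)$ is the generator matrix with off-diagonal entries $\pi_{nn'}\e^{\gamma(b_n - b_{n'})}$; so $J(b)$ is invertible everywhere.

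For existence, I first establish an a priori bound. Let $n^*$ and $n^{**}$ minimize and maximize $b_n$ respectively; evaluating \eqref{eq:bnsystem} at $n^*$ and using $\pi_{n^*n'} \ge 0$ and $\e^{\gamma(b_{n^*} - b_{n'})} - 1 \le 0$ for $n' \ne n^*$ gives $b_{n^*} \ge y_{n^*} - 1/\gamma + \rho/(\gamma r)$, and the symmetric argument gives $b_{n^{**}} \le y_{n^{**}} - 1/\gamma + \rho/(\gamma r)$, so every solution lies in a compact box depending continuously on $y$ and $r$. I then run a continuation along the homotopy $y(t) = \bar y 1_N + t(y - \bar y 1_N)$ for $t \in [0,1]$, with $\bar y = N^{-1}\sum_n y_n$: at $t = 0$ the constant vector $b_n = \bar y - 1/\gamma + \rho/(\gamma r)$ is clearly a solution, so the set $S \subseteq [0,1]$ of parameters admitting a solution contains $0$; it is open by the implicit function theorem (using Jacobian invertibility), and closed because the uniform a priori box and continuity of the defining map permit passage to the limit along a convergent subsequence; hence $S = [0,1]$. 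Continuity of $b(r)$ in $r > 0$ follows from the implicit function theorem applied jointly in $(b, r)$. Finally, $\min_n b_n(r) \to \infty$ as $r \downarrow 0$ is immediate from the lower bound $\min_n b_n \ge \min_n y_n - 1/\gamma + \rho/(\gamma r)$.

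The main obstacle is existence, since the map defined by the right-hand side of \eqref{eq:bnsystem} is neither order-preserving in $b$ nor an obvious contraction, so direct iterative fixed-point arguments fail. The continuation approach succeeds precisely because Jacobian invertibility holds at every $b$, not just at solutions, which in turn traces back to the Metzler-with-zero-row-sums structure exploited throughout.
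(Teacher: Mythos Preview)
Your proof is correct and follows a genuinely different route from the paper. The paper recasts \eqref{eq:bnsystem} as a fixed-point equation $b=f(b)$, where $f$ is obtained by adding $kb_n/(k+1)$ to the right-hand side; for $k$ large enough this makes $f$ increasing on the order interval $[u,v]$ determined by your a~priori bounds, and the Metzler sign pattern makes $f$ concave. Existence and uniqueness then come in one stroke from a fixed-point theorem for increasing concave self-maps of order intervals \citep[Corollary 2.1.1]{Zhang2013}, and continuity in $r$ is deduced from the Berge maximum theorem.

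Your approach instead separates the two issues: uniqueness via the linearization $r\delta=\tilde\Pi\delta$ with $\zeta(\tilde\Pi)=0$, and existence via a homotopy from the constant-income case, using that the Jacobian $I-\tilde\Pi(b)/r$ is everywhere invertible. This is more self-contained---it uses only the implicit function theorem and the paper's own Theorem~\ref{thm:rowsums}, with no external fixed-point result---and it makes transparent that the generator structure of the linearization is what drives both uniqueness and nondegeneracy. The paper's approach, by contrast, packages existence and uniqueness together and avoids the continuation machinery, at the cost of invoking a less standard fixed-point theorem and the somewhat artificial $k$-modification. Your closing remark that the raw right-hand side is not order-preserving is exactly the obstruction the paper overcomes with that modification. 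The a~priori bounds and the conclusion $\min_n b_n\to\infty$ are obtained identically in both arguments.
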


\begin{proof}
	
	Fix a constant $k>0$, and let $f:\R^N\to\R^N$ be the map with $n$th entry
    \begin{equation}\label{eq:fixedpoint}
    	f_n(b)=\frac{1}{k+1}\left(kb_n+y_n-\frac{1}{\gamma}+\frac{\rho}{\gamma r}-\frac{1}{\gamma r}\sum_{n'=1}^N\pi_{nn'}\e^{\gamma(b_n-b_{n'})}\right).
   	\end{equation}
    The system of equations \eqref{eq:bnsystem} may be rewritten as $b=f(b)$. By Corollary 2.1.1 of \citet{Zhang2013}, if we can find an order interval $[u,v]\subset\R^N$ such that $f$ is increasing and concave on $[u,v]$ with $f_n(u)>u_n$ and $f_n(v)<v_n$ for all $n$, then the restriction of $f$ to $[u,v]$ has a unique fixed point. To this end, we fix $\eta>0$ and set
	\begin{equation*}    
	u=\left(\min_ny_n-\frac{1}{\gamma}+\frac{\rho}{\gamma r}-\eta\right)1_N,\quad v=\left(\max_ny_n-\frac{1}{\gamma}+\frac{\rho}{\gamma r}+\eta\right)1_N.
	\end{equation*}
    It is easily verified that $f_n(u)>u_n$ and $f_n(v)<v_n$ for all $n$. Observe that
    \begin{equation*}
    	\frac{\partial f_m(b)}{\partial b_n}=\begin{cases*}
    		\frac{1}{k+1}\left(k-\frac{1}{r}\sum_{n'\ne n}\pi_{nn'}\e^{\gamma(b_n-b_{n'})}\right)& if $m=n$,\\\
    		\frac{1}{(k+1)r}\pi_{mn}\e^{\gamma(b_m-b_n)}& if $m\ne n$.
    	\end{cases*}
    \end{equation*}
    Since $\Pi$ is Metzler and $[u,v]$ is bounded, we may choose $k$ large enough that all partial derivatives are nonnegative on $[u,v]$, ensuring that $f$ is increasing on $[u,v]$. Next, observe that, for any $\delta\in(0,1)$ and $b,b'\in\R^N$, the Metzler property of $\Pi$ and convexity of $x\mapsto\e^{\gamma x}$ together imply that
	\begin{equation*}    
	\pi_{nn'}\e^{\gamma\left((\delta b_n+(1-\delta)b'_n)-(\delta b_{n'}+(1-\delta)b'_{n'})\right)}\le \delta\pi_{nn'}\e^{\gamma(b_n-b_{n'})}+(1-\delta)\pi_{nn'}\e^{\gamma(b'_n-b'_{n'})}
	\end{equation*}
    for all $n,n'$, with equality when $n=n'$. It thus follows from \eqref{eq:fixedpoint} that $f$ is concave. We deduce from the aforementioned result of \citet{Zhang2013} that the restriction of $f$ to $[u,v]$ has a unique fixed point, which (with a mild abuse of notation) we denote $b(r)$. It follows that $b(r)$ is the unique solution to \eqref{eq:bnsystem} in $[u,v]$. Moreover, since $\eta>0$ was arbitrary, $b(r)$ is in fact the unique solution to \eqref{eq:bnsystem} in $\R^N$, and satisfies
    \begin{equation*}\label{eq:solutionbounds}
    	\min_{n'}y_{n'}-\frac{1}{\gamma}+\frac{\rho}{\gamma r}\le b_n(r)\le\max_{n'}y_{n'}-\frac{1}{\gamma}+\frac{\rho}{\gamma r}\quad\text{for all }n.
    \end{equation*}
    We therefore see that $\min_nb_n(r)\to\infty$ as $r$ decreases to zero. Furthermore, since $b(r)$ is the unique value of $b$ maximizing $-\norm{f(b)-b}$, which varies continuously with $b$ and $r$, the Berge maximum theorem implies that $b(r)$ is continuous in $r$.
\end{proof}

\begin{proof}[Proof of Proposition \ref{prop:optimum}]
	Let $\mathcal{D}$ be the collection of all functions $V:\mathcal{N}\times\R\to\R$ such that $V_n(w)$ is a continuously differentiable function of $w\in\R$ for each $n\in\mathcal{N}$. The optimization problem to be solved is a special case of Example (c) in Sections III.4 and III.7 of \citet{FlemingSoner2006}, with $x$, $z$ and $u$ in their notation corresponding to $W$, $J$ and $C$ in our notation. It is shown there that the relevant Hamilton-Jacobi-Bellman (HJB) equation for our optimization problem is
	\begin{equation}\label{eq:HJB}
		\rho V_n(w)=\max_{c\in\R}\set{u(c)+V_n'(w)(rw-c+y_n)+\sum_{n'=1}^N\pi_{nn'}V_{n'}(w)}.
	\end{equation}
	Note that this is an $N$-state generalization of the HJB equation obtained by \citet{Moll2020} in the two-state case. A function $V^\ast\in\mathcal{D}$ that solves \eqref{eq:HJB} for every $(n,w)\in\mathcal{N}\times\R$ is called a classical solution. With our CARA specification of $u(c)$, elementary calculus may be used to verify that the maximum in \eqref{eq:HJB} is achieved by
	\begin{equation}\label{eq:cstar}
		c=-\frac{1}{\gamma}\log V'_n(w),
	\end{equation}
	and that a classical solution to \eqref{eq:HJB} is given by $V^\ast_n(w)=-\e^{-\gamma(rw+b_n)}/\gamma r$, where $b_1,\dots,b_n$ are the unique constants given by Lemma \ref{lem:fixedpoint}.
	
	A consumption flow $C\in\mathcal{C}$ satisfies the no-Ponzi condition \eqref{eq:budget}, meaning that it belongs to $\mathcal{C}_1$, if and only if it satisfies the so-called transversality condition
	\begin{equation}\label{eq:transversality}
		\limsup_{t\to\infty}\e^{-\rho t}\E_0V^\ast_{J_t}(W^C_t)=0.
	\end{equation}
	By Theorem 9.1 of \citet[p.~140]{FlemingSoner2006}, if there is a consumption flow $C^\ast\in\mathcal{C}_1$ with associated wealth process $W^\ast$ such that
	\begin{equation}\label{eq:argmin}
		C^\ast_t\in\argmax_{c\in\R}\set{u(c)+V^{\ast\prime}_n(W^\ast_t)(rW^\ast_t-c+y_n)+\sum_{n'=1}^N\pi_{nn'}V^\ast_{n'}(W^\ast_t)}\quad\text{while }J_t=n,
	\end{equation}
    then $U(C)$ attains its maximum over $\mathcal{C}_1$ at $C^\ast$. Moreover, any such $C^\ast$ is the unique maximizer of $U(C)$ over $\mathcal{C}_1$ due to the strict concavity of $u(c)$.
    
    We will show that the consumption flow $C^\ast$ given by \eqref{eq:optimalc} belongs to $\mathcal{C}_1$ and satisfies \eqref{eq:argmin}. It is obvious that $C^\ast\in\mathcal{C}$, so to show that $C^\ast\in\mathcal{C}_1$, we need to verify the no-Ponzi condition \eqref{eq:budget}. Since $W^\ast$ is a Markov-modulated \Levy process, it follows from Proposition \ref{prop:asmussen} that, when $J_0=n$, the quantity $\e^{-\rho t}\E_0(\e^{-\gamma rW_t^\ast})$ appearing in \eqref{eq:budget} is the $n$th row sum of $\exp(tA)$, where
	\begin{equation*}
	A=-\rho I-\gamma r\diag(y-b)+\Pi.
	\end{equation*}
    Let $D$ be the $N\times N$ diagonal matrix with $n$th diagonal entry $\e^{\gamma b_n}$. From \eqref{eq:bnsystem} we see that $DAD^{-1}$ is Metzler with each row summing to $-r$. Consequently, Theorem \ref{thm:rowsums} implies that $\zeta(DAD^{-1})=-r$. Since $A$ and $DAD^{-1}$ have the same spectra, we deduce that $\zeta(A)=-r$, which implies that $\exp(tA)\to 0$ as $t\to\infty$. Thus \eqref{eq:budget} is satisfied and $C^\ast\in\mathcal{C}_1$. Finally, in view of the fact that \eqref{eq:cstar} solves the maximization in \eqref{eq:HJB}, and that $C_t^\ast=-(1/\gamma)\log V^{\ast\prime}_n(W_t^\ast)$ while $J_t=n$, it is clear that $C^\ast$ satisfies \eqref{eq:argmin}. We conclude that $C^\ast$ uniquely maximizes $U(C)$ over $\mathcal{C}_1$. 
\end{proof}
\begin{proof}[Proof of Proposition \ref{prop:geneq}]
For $r>0$, let $g(r)=\varpi^\top(y-b(r))$. The general equilibrium condition \eqref{eq:geneq} is satisfied by any $r>0$ such that $g(r)=0$. Lemma \ref{lem:fixedpoint} implies that $g(r)$ is a continuous function of $r>0$ with $g(r)\to-\infty$ as $r$ decreases to zero. Therefore, if we can show that 
$g$ is somewhere nonnegative, then the existence of $r>0$ satisfying $g(r)=0$ will follow from the intermediate value theorem. Subtracting $y_n$ from either side of \eqref{eq:bnsystem}, multiplying by $-\gamma r\varpi_n$, and summing over $n$, we obtain
\begin{equation*}
\gamma rg(r)=r-\rho+\sum_{n=1}^N\sum_{n'=1}^N\varpi_n\pi_{nn'}\e^{\gamma(b_n(r)-b_{n'}(r))}.
\end{equation*}
The Metzler property of $\Pi$ and the inequality $\e^x\ge1+x$ together imply that 
\begin{equation*}
\pi_{nn'}\e^{\gamma(b_n(r)-b_{n'}(r))}\ge\pi_{nn'}(1+\gamma(b_n(r)-b_{n'}(r)))
\end{equation*}
for all $n,n'$, with equality when $n=n'$. We therefore have
\begin{equation*}
\gamma rg(r)\ge r-\rho+\sum_{n=1}^N\varpi_n(1+\gamma b_n(r))\left(\sum_{n'=1}^N\pi_{nn'}\right)-\gamma\sum_{n'=1}^N\left(\sum_{n=1}^N\varpi_n\pi_{nn'}\right)b_{n'}(r).
\end{equation*}
Since $\sum_{n'}\pi_{nn'}=0$ and $\sum_{n}\varpi_n\pi_{nn'}=0$ (because $\varpi$ is the stationary distribution corresponding to $\Pi$), it follows that $\gamma rg(r)\ge r-\rho$. Thus $g(r)>0$ for $r>\rho$, and we conclude that there exists $r\in (0,\rho]$ such that $g(r)=0$.

When $r$ satisfies \eqref{eq:geneq}, it must be the case that either $b_n(r)<y_n$ and $b_{n'}(r)>y_{n'}$ for some $n,n'$, or $b(r)=y$. We will show that if $b(r)=y$ then all the values $y_1,\dots,y_N$ are equal. Substituting $y$ for $b$ in the system of equations \eqref{eq:bnsystem}, we obtain
\begin{equation*}
(\rho-r)\e^{-\gamma y_n}=\sum_{n'=1}^N\pi_{nn'}\e^{-\gamma y_{n'}}
\end{equation*}
for each $n$. Thus $v=(\e^{-\gamma y_1},\dots,\e^{-\gamma y_N})^\top$ is a right eigenvector of $\Pi$ with corresponding eigenvalue $\rho-r$. The stationary distribution $\varpi$ is a left eigenvector of $\Pi$ with corresponding eigenvalue zero, so we have $0=\varpi^\top\Pi v=(\rho-r)\varpi^\top v$, implying that $r=\rho$. Thus the eigenvalue corresponding to the right eigenvector $v$ is zero. Since $\Pi$ is irreducible Metzler with $\zeta(\Pi)=0$ (by Theorem \ref{thm:rowsums}), Theorem \ref{thm:PFmetzler} implies that there is a unique (up to scalar multiplication) right eigenvector of $\Pi$ corresponding to its zero eigenvalue. All row sums of $\Pi$ are zero, so one such right eigenvector is $1_N$. Thus $v$ must be a scalar multiple of $1_N$, meaning that $y_1=\dots=y_N$.
\end{proof}

\bibliographystyle{plainnat}

\begin{thebibliography}{99}
	
\bibitem[Albornoz et~al.(2016)Albornoz, Fanelli, and Hallak]{AlbornozFanelliHallak2016}
Albornoz, F., S.\ Fanelli, \& J.C.\ Hallak (2016). Survival in export markets. \emph{Journal of International Economics} 102, 262--281.

\bibitem[Achdou et~al.(2020)]{Moll2020}
Achdou, Y., J.\ Han, J.-M.\ Lasry, P.-L.\ Lions, \& B.\ Moll (2020). Income and wealth distribution in macroeconomics: A continuous-time approach. To appear in the \emph{Review of Economic Studies}. Preprint: \url{https://benjaminmoll.com/papers}.

\bibitem[Aoki and Nirei(2017)]{AokiNirei2017}
Aoki, S.\ \& M.\ Nirei (2017). Zipf's law, Pareto's law, and the evolution of top incomes in the United States. \emph{American Economic Journal: Macroeconomics} 9, 36--71.

\bibitem[Arkolakis(2016)]{Arkolakis2016}
Arkolakis, C.\ (2016). A unified theory of firm selection and growth. \emph{Quarterly Journal of Economics} 131, 89--155.

\bibitem[Asmussen(2003)]{Asmussen2003}
Asmussen, S.\ (2003). \emph{Applied Probability and Queues},  2nd ed. Springer.

\bibitem[Beare and Toda(2020)]{BeareToda2020}
Beare, B.K.\ \& A.A.\ Toda (2020). Geometrically stopped Markovian random growth processes and Pareto tails. Preprint: \url{https://arxiv.org/abs/1712.01431}.

\bibitem[Benhabib et~al.(2016)Benhabib, Bisin, and Zhu]{benhabib-bisin-zhu2016}
Benhabib, J., A.\ Bisin, \& S.\ Zhu (2016). The distribution of wealth in the Blanchard-Yaari model. \emph{Macroeconomic Dynamics} 20, 466--481.

\bibitem[Cao and Luo(2017)]{CaoLuo2017}
Cao, D.\ \& W.\ Luo (2017). Persistent heterogeneous returns and top end wealth inequality. \emph{Review of Economic Dynamics} 26, 301--326.

\bibitem[Chen and Francis(1995)]{ChenFrancis1995}
Chen, T.\ \& B.\ Francis (1995). \emph{Optimal Sampled-Data Control Systems}. Springer.

\bibitem[Deutsch(1975)]{Deutsch1975}
Deutsch, E.\ (1975). The spectral abscissa of partitioned matrices. \emph{Journal of Mathematical Analysis and Applications} 50, 66--73.

\bibitem[Fleming and Soner(2006)]{FlemingSoner2006}
Fleming, W.H.\ \& H.M.\ Soner (2006). \emph{Controlled Markov Processes and Viscosity Solutions}. Springer.

\bibitem[Gabaix et~al.(2016)Gabaix, Lasry, Lions, and
Moll]{GabaixLasryLionsMoll2016}
Gabaix, X., J.-M.\ Lasry, P.-L.\ Lions, \& B.\ Moll (2016). The dynamics of inequality. \emph{Econometrica} 84, 2071--2111.

\bibitem[Hartman(1982)]{Hartman1982}
Hartman, P.\ (1982). \textit{Ordinary Differential Equations}, 2nd ed. Birkhauser.

\bibitem[Horn and Johnson(2013)]{HornJohnson2013}
Horn, R.A.\ \& C.R.\ Johnson (2013). \emph{Matrix Analysis}, 2nd ed. Cambridge University Press.

\bibitem[Howland(1971)]{Howland1971}
Howland, J.S.\ (1971). Simple poles of operator-valued functions. \emph{Journal of Mathematical Analysis and Applications} 36, 12--21.

\bibitem[Jones and Kim(2018)]{JonesKim2018}
Jones, C.I.\ \& J.\ Kim (2018). A Schumpeterian model of top income inequality. \emph{Journal of Political Economy} 126, 1785--1826.

\bibitem[Kasa and Lei(2018)]{KasaLei2018}
Kasa, K.\ \& X.\ Lei (2018). Risk, uncertainty, and the dynamics of inequality. \emph{Journal of Monetary Economics} 94, 60--78.

\bibitem[Lukacs(1970)]{Lukacs1970}
Lukacs, E.\ (1970). \emph{Characteristic Functions}, 2nd ed. Griffin.

\bibitem[Nakagawa(2007)]{Nakagawa2007}
Nakagawa, K.\ (2007). Application of Tauberian theorem to the exponential decay of the tail probability of a random variable. \emph{IEEE Transactions on Information Theory} 53, 3239--3249.

\bibitem[Norris(1997)]{Norris1997}
Norris, J.R.\ (1997). \emph{Markov Chains}. Cambridge University Press.

\bibitem[Nussbaum(1986)]{Nussbaum1986}
Nussbaum, R.D.\ (1986). Convexity and log convexity for the spectral radius. \emph{Linear Algebra and its Applications} 73, 59--122.

\bibitem[Reed(2001)]{reed2001}
Reed, W.J.\ (2001). The Pareto, Zipf and other power laws. \emph{Economics Letters} 74, 15--19.

\bibitem[Sato(1999)]{Sato1999}
Sato, K.\ (1999). \emph{L\'{e}vy Processes and Infinitely Divisible Distributions}. Cambridge University Press.

\bibitem[Schumacher(1986)]{Schumacher1986}
Schumacher, J.M.\ (1986). Residue formulas for meromorphic matrices. In C.I.\ Byrnes \& A.\ Lindquist (eds.),
\emph{Computational and Combinatorial Methods in Systems Theory}, pp.~97--111. North-Holland.

\bibitem[Schumacher(1991)]{Schumacher1991}
Schumacher, J.M.\ (1991). System-theoretic trends in econometrics. In A.C.\ Antoulas (ed.), \emph{Mathematical System Theory: The Influence of R.E.\ Kalman}, pp.~559--577. Springer.

\bibitem[Smith(1995)]{Smith1995}
Smith, H.L.\ (1995). \emph{Monotone Dynamical Systems: An Introduction to the Theory of Competitive and Cooperative Systems}. American Mathematical Society.

\bibitem[Stachurski and Toda(2019)]{StachurskiToda2019}
Stachurski, J.\ \& A.A.\ Toda (2019). An impossibility theorem for wealth in heterogeneous-agent models with limited heterogeneity. \emph{Journal of Economic Theory} 182, 1--24.

\bibitem[Steinberg(1968)]{Steinberg1968}
Steinberg, S.\ (1968). Meromorphic families of compact operators. \emph{Archive for Rational Mechanics and Analysis} 31, 372--379.

\bibitem[Toda(2014)]{Toda2014JET}
Toda, A.A.\ (2014). Incomplete market dynamics and cross-sectional distributions. \emph{Journal of Economic Theory} 154, 310--348.

\bibitem[Toda(2017)]{Toda2017}
Toda, A.A.\ (2017). Huggett economies with multiple stationary equilibria. \emph{Journal of Economic Dynamics and Control} 84, 77--90.

\bibitem[Toda and Walsh(2015)]{TodaWalsh2015JPE}
Toda, A.A.\ \& K.\ Walsh (2015). The double power law in consumption and implications for testing Euler equations. \emph{Journal of Political Economy} 123, 1177--1200.

\bibitem[Wang(2003)]{Wang2003}
Wang, N.\ (2003). Caballero meets Bewley: The permanent-income hypothesis in general equilibrium. \emph{American Economic Review} 93, 927--936.

\bibitem[Widder(1941)]{Widder1941}
Widder, D.V.\ (1941). \emph{The Laplace Transform}. Princeton University Press.

\bibitem[Yaari(1965)]{Yaari1965}
Yaari, M.E.\ (1965). Uncertain lifetime, life insurance, and the theory of the consumer. \emph{Review of Economic Studies} 32, 137--150.

\bibitem[Zhang(2013)]{Zhang2013}
Zhang, Z.\ (2013). \emph{Variational, Topological, and Partial Order Methods with Their Applications}. Springer.

\end{thebibliography}

\end{document}